\documentclass[11pt]{article}
\usepackage[colorlinks,linkcolor=blue,citecolor=orange]{hyperref}

\usepackage[letterpaper]{geometry}
\usepackage{amsmath,amsthm,amsfonts,amssymb,booktabs}
\usepackage{enumerate,color,xcolor}
\usepackage{graphicx}
\usepackage{subfigure}
\usepackage{url}
\usepackage{todo}
%\usepackage{accents}
%\usepackage{refcheck}

%%%%%%%%%%%%%%%%%%%%%%%%%

\usepackage{amssymb}
%% The amsthm package provides extended theorem environments
\usepackage{amsthm}
\usepackage{amsmath}
\usepackage{mathrsfs}
\usepackage{bm}

%%%%%%%%%%%%%%%%%%%%%%%%%%%

%\usepackage{todonotes}

\numberwithin{equation}{section}
\theoremstyle{plain}

\newtheorem{theorem}{Theorem}[section]
\newtheorem{lemma}{Lemma}[section]
\newtheorem{assumption}{Assumption}[section]

\newtheorem{corollary}[theorem]{Corollary}

\newtheorem{remark}[theorem]{Remark}

\usepackage{comment}

\newcommand{\beq}{\begin{eqnarray}}
\newcommand{\eeq}{\end{eqnarray}}
\newcommand{\beqs}{\begin{eqnarray*}}
\newcommand{\eeqs}{\end{eqnarray*}}

\makeatletter
\def\munderbar#1{\underline{\sbox\tw@{$#1$}\dp\tw@\z@\box\tw@}}
\makeatother

%\titlespacing{\section}{0pt}{0.2ex}{0ex}
%\titlespacing{\subsection}{0pt}{0.3ex}{0.3ex}
%\linespread{0.96}

\begin{document}

\title{ Scaling limits for INAR$(\infty)$ processes}

%\author{Nian Yao  \footremember{ny}{College of Mathematics and Statistics, Shenzhen University, Shenzhen, Guangdong Province, 518060, China.\newline Email: yaonian@szu.edu.cn}. ORCID：0000-0002-9233-3384}

\author{\bf Nian Yao \thanks{College of Mathematics and Statistics, Shenzhen University, Shenzhen, Guangdong Province, 518060, China. Email: yaonian@szu.edu.cn. ORCID: 0000-0002-9233-3384.}}

\date{}
\maketitle
\begin{abstract}
   In this paper, we study law of large numbers, central limit theorem, large and moderate deviations for INAR($\infty$) processes, which as a special case, includes both discrete-time linear Hawkes process and INAR(1) process in the literature. Our results recover existing results on large and moderate deviations for the discrete-time Hawkes process as studied in \cite{Wang2} and for the INAR(1) process as in \cite{Yu}.  
   \end{abstract}

{\bf Keywords} {discrete-time; Integer-valued AR model; self-exciting; large deviations; moderate deviations}

{\bf Mathematics Subject Classification} (2020) 60J80 · 60F10 · 62M10 · 62G20
\section{Introduction}
The theory and practice of statistical inference for integer-valued time series models are rapidly developing and they are important topics of the modern theory of statistics. One of the most successful integer-valued time series models proposed in the literature is the integer-valued autoregressive model of order $p$ (INAR($p$)). This model plays a crucial role in the study of statistical inference and was first introduced by \cite{AlzaidAl-Osh1990}. \cite{Gourieroux2004} introduces the INAR model with unobserved heterogeneity, applying it to premium updates in car insurance, and comparing it to the standard method based on the negative binomial distribution. \cite{Bisaglia2019} analyses several bootstrap techniques to conduct inference in INAR($p$) models. In an empirical application, they compare the performances of these methods in estimating the thinning parameters in INAR($p$) models. Then they employ model-based bootstrap methods to obtain coherent predictions and confidence intervals. Further applications of INAR processes can be found in various fields, including the relationship between personality factors and daily emotional experiences, syndromic surveillance, bivariate series concerning daytime and nighttime road accidents, public health, and real count time series. For a list of references regarding these applications, we refer to \cite{Bockenholt1999}, \cite{Andersson2010}, \cite{Pedeli2011}, \cite{Fontelo2016}, \cite{Mohammadpour2018}.

Many studies have focused on the asymptotic properties of the statistical law for the model.  \cite{AlzaidAl-Osh1988} examines the distribution and regression behavior of the INAR(1) process.\cite{ParkOh1997} demonstrates that the sample autocovariance function of the INAR(1) process with Poisson marginal distributions is asymptotically normally distributed and derived the asymptotic distribution of Yule-Walker-type estimators for the model parameters.\cite{ZhengBasawa2008} establishes the ergodicity of the INAR(1) process and obtained conditional least squares and maximum likelihood estimators of the model parameters. \cite{Silva2006} considers Yule-Walker parameter estimation for the INAR($p$) process. \cite{Bu2008} explores model selection,, estimation and forecasting for a class of INAR($p$) models suitable for use when analysing time series count data. \cite{Barczy2011} describes the asymptotic behavior of an unstable integer-valued autoregressive model of order $p$ (INAR($p$)). \cite{Nastic2012} introduces a new INAR($p$) model with geometric marginal distributions, denoted by CGINAR($p$), utilizing  negative binomial thinning. \cite{Pedeli2015} obtain conditional least squares estimation and maximum likelihood estimation  of high-order INAR($p$) processes, and propose a simple saddlepoint approximation to the log-likelihood that performs well even in the tails of the distribution and with complicated INAR models. \cite{LiJiangWang2018} show a large and moderate deviation result with explicit rate functions for a sequence of nearly critical INAR(1) processes.

When $p$ goes to $\infty$, the model might better capture the reality and be more relevant in practice, which leads to the study of INAR$(\infty)$ processes. An integer-valued autoregressive time series of infinite order (INAR$(\infty)$)
    is a sequence of random variables $(X_{n})_{n\in \mathbb{Z}}$
    such that
    \begin{equation}\label{main:dynamics}
    X_{n}=\varepsilon_{n}+\sum_{k=1}^{\infty}\sum_{l=1}^{X_{n-k}}\xi_{l}^{(n,k)},\qquad n\in\mathbb{Z},
\end{equation}
where $\varepsilon_{n}$ and $\xi_{l}^{(n,k)}$ are $\{0,1,2,\ldots\}$-valued random variables, and $\varepsilon_{n}$ are i.i.d. distributed as $\varepsilon$, and $\xi_{l}^{(n,k)}$ are i.i.d. over $n$ and $l$ distributed as $\xi_{k}$, and also independent of $(\varepsilon_{n})$. $\xi_{l}^{(n,k)}$ and $\xi_{l}^{(n,\tilde{k})}$
for $k, \tilde{k} \in \{1,2,\ldots,\}$, $k \neq \tilde{k}$ are independent.

For the special case when $\varepsilon_{n}$ are i.i.d. Poisson with mean $\alpha_{0}$,
and $\xi_{k}$ are Poisson with mean $\alpha_{k}$, then $\alpha_{0}$ is called
the immigration parameter, $(\varepsilon_{n})$ the immigration sequence, $\alpha_{k}\geq 0$
the reproduction coefficients, and $K:=\sum_{k=1}^{\infty}\alpha_{k}$ the reproduction mean.
In this special case, there is a strong link between INAR$(\infty)$ and the continuous-time
Hawkes process; see \cite{Kirchner}. Indeed, INAR$(\infty)$ includes the discrete-time Hawkes
process as a special example. \cite{Wang2, Wang1} study the limit theorems, large and moderate deviations for a discrete-time marked Hawkes process.
For the continuous-time linear Hawkes processes, \cite{Bordenave} obtained a large deviation principle and \cite{ZhuII} studied the moderate deviations.
The large deviation principle of a marked linear Hawkes process can be found in \cite{Karabash},
and the moderate deviation principle is studied in \cite{Seol3}. For nonlinear Hawkes processes, \cite{ZhuIII} established the level-3 large deviation principle. \cite{ZhuIV} proved the large deviations for Markovian Hawkes processes with exponential exciting function as well as a sum of exponentials functions as the exciting function.

We consider the INAR$(\infty)$ process starting with empty history, i.e.
$X_{n}=0$ for any $n\leq 0$ in \eqref{main:dynamics}, so that $X_{1}=\varepsilon_{1}$ and
for any $n\geq 2$,
\begin{equation}\label{main:dynamicsempty}
X_{n}=\varepsilon_{n}+\sum_{k=1}^{n-1}\sum_{l=1}^{X_{n-k}}\xi_{l}^{(n,k)}.
\end{equation}

Throughout the paper, we define $\Vert\alpha\Vert_{1}:=\sum_{i=1}^{\infty}\alpha_i$
as the $\ell_{1}$ norm of $\alpha$ where $\alpha$ is a nonnegative vector and assume that $\mathbb{E}[\varepsilon]<\infty$ and $\Vert\mathbb{E}[\boldsymbol{\xi}]\Vert_{1}<1$ where $\boldsymbol{\xi}:=(\xi_1,\xi_2,\ldots)$
and $\mathbb{E}[\boldsymbol{\xi}]:=(\mathbb{E}[\xi_1],\mathbb{E}[\xi_2],\ldots)$. 
We will show in Section~\ref{sec:LLN} that the following law of large numbers (LLN) holds:
\begin{equation}
\label{eq:lln}
\lim_{n\rightarrow\infty}\frac{1}{n}{\sum_{k=1}^{n}X_{k}}=\mu:=\frac{\mathbb{E}[\varepsilon]}{1-\Vert\mathbb{E}[\boldsymbol{\xi}]\Vert_{1}},
\end{equation}
almost surely as $n\rightarrow\infty$. It is thus natural to study the fluctuations around
the LLN limit $\mu$. In particular, we will derive the central limit theorem (CLT) in Section \ref{Central limit theorem}.

In \cite{Barczy2014} the asymptotic behavior of the conditional least squares estimators
of the autoregressive parameters, of the mean of the innovations, and of the stability parameter
for unstable INAR(2) processes is described. The large and moderate deviations for INAR(1) processes have been
studied in \cite{Yu}. In addition to the LLN and CLT, we are interested in deriving large and moderate deviations for INAR$(\infty)$ processes.
Even though there are already many studies of INAR$(\infty)$ processes and related models
in the literature, to the best of our knowledge, there are no studies on large and moderate deviations for INAR$(\infty)$ processes and our paper fills in this gap. The rest of the paper is organized as follows. In Section \ref{Main Results},
we state our main results. The large deviations for INAR$(\infty)$ processes are studied in Section \ref{Large deviations}, and the moderate deviations are provided in Section \ref{Moderate deviations}, followed by the law of large numbers and central limit theorems in Sections~\ref{sec:LLN}, \ref{Central limit theorem}. Finally, the proofs of the main results will be provided in Section \ref{Proof of Main Results}.
%%%%%%%%%%%%%%%%%%%%%%%%%%%%%%%%%%

%%%%%%%%%%%%%%%%%%%%%%%%%%%%%%%%%%%%%%%%%%%%%%%%%%%
\section{Main Results}\label{Main Results}

Before we proceed to stating the main results of the paper,
let us first introduce large and moderate deviation principles.

%%%%%%%%%%%%%%%%%%%%%%%%%%%%%%%%%%%%%%%%%%%%%%%%%%%
\textbf{Large deviation principles.}
Following \cite{Dembo}, we introduce the definition of large deviation principle. A family of probability measures $\{\mathbf{P}_{n}\}_{n\in\mathbb{N}}$
on a topological space $\left(X,\mathcal{T}\right)$ satisfies the large deviation principle with rate function $I(\cdot):X\to[0,\infty]$ and speed $a_n$
if $I$ is a lower semi-continuous function, $(a_n)_{n\in\mathbb{N}}$ is a sequence of positive real numbers converging to $\infty$, and the following inequalities
hold for every Borel set $A$:
\begin{equation}
-\inf_{x\in A^{o}}I(x)\leq\liminf_{n\rightarrow\infty}\frac{1}{a_n}\log \mathbf{P}_{n}(A)
\leq\limsup_{n\rightarrow\infty}\frac{1}{a_n}\log \mathbf{P}_{n}(A)\leq-\inf_{x\in\overline{A}}I(x).
\end{equation}
Here $A^{o}$ is the interior of $A$ and $\overline{A}$ is the closure of A. We say that the rate function $I$ is good if for any $m\ge0$, the level set $\{x\in X:I(x)\le m, m\ge0\}$ is compact.
In addition to \cite{Dembo}, we also refer to \cite{VaradhanII} for a survey on large deviations.

%%%%%%%%%%%%%%%%%%%%%%%%%%%%%%%%%
\textbf{Moderate deviation principles.}
For any sequence $c_{n}$ such that $\sqrt{n}\ll c_n\ll n$, where $a_n\ll b_n$ means $a_n= o(b_n)$, as $n\rightarrow\infty$, a family of probability measures $\{\mathbf{P}_{n}\}_{n\in\mathbb{N}}$ on a topological space $\left(X,\mathcal{T}\right)$ satisfies a moderate deviation principle with rate function $J(\cdot):X\to[0,\infty]$
if $\{\mathbf{P}_{n}\}_{n\in\mathbb{N}}$ satisfies a large deviation principle with speed $\frac{c^2_n}{n}$ and with rate function $J(\cdot)$.

For example, let $X_1,\cdots,X_n$ be a sequence of i.i.d. random variables commonly distributed as $X$ and assume $\mathbb{E}\left[e^{\theta X}\right]<\infty$ for $\theta$ in some ball around the origin.
Then, $\mathbf{P}_{n}:=\mathbf{P}\left(\frac{1}{c_n}\sum^{n}_{i=1}X_i\in\cdot\right)$ satisfies a large deviation principle with speed $\frac{c^2_n}{n}$. Moderate deviations fills the gap between ordinary deviations approximated by the central limit theorem and large deviations.
We refer to \cite{Dembo} for the background on moderate deviation principles.

%%%%%%%%%%%%%%%%%%%%%%%%%%%%%%%%%

\begin{assumption}\label{CLT-Assumption}
\begin{itemize}
\item[(a)] $\Vert\mathbb{E}[\boldsymbol{\xi}]\Vert_{1}<1$ and 
$\Vert{\rm Var}[\boldsymbol{\xi}]\Vert_{1}<\infty$; 
\item[(b1)] $\sup\limits_{n>0}n^\frac{3}{2}\mathbb{E}[\xi_{n}]=C_{1}<\infty$;
\item[(b2)] $\sup\limits_{n>0}n^a\mathbb{E}[\xi_{n}]=C_{2}<\infty$ for some $a>\frac{3}{2}$; 
%\item[(c1)] $\sqrt{n}\sum_{k=n}^{\infty}\mathbb{E}[\xi_k]\rightarrow 0$ as $n\rightarrow\infty$; 
%\item[(c2)] $\frac{1}{\sqrt{n}} \sum_{k=1}^{n}k\mathbb{E}[\xi_k]\rightarrow 0$ as $n\rightarrow\infty$; 
%\item[(c3)] $\sum_{k=1}^{n}k\mathbb{E}[\xi_k]<\infty$ as $n\rightarrow\infty$.
\item[(c)]$\mathbb{E}[\varepsilon]<\infty$ and ${\rm Var}[{\varepsilon}]<\infty$.

\end{itemize}
\end{assumption}

\subsection{Large deviations for INAR$(\infty)$ processes}\label{Large deviations}
For the  INAR$(\infty)$ process, we have the following large deviation principle.

\begin{theorem}\label{thm:LDP} Assume that Assumption \ref{CLT-Assumption} $(a),(c)$.
$\mathbf{P}(\frac{1}{n}\sum_{k=1}^{n}X_{k}\in\cdot)$
satisfies a large deviation principle with speed $n$ and rate function
\begin{equation}
I(x)=\sup_{\theta\leq\theta_{c}}\left\{\theta x-\log\mathbb{E}[e^{f_{\infty}(\theta)\varepsilon}]\right\},
\end{equation}
where $f_{\infty}(\theta)$ is the smaller solution
to the equation $F(x)=\theta$
for any $\theta\leq\theta_{c}:=\max_{x\in\mathbb{R}}F(x)$,
where
\begin{equation}
F(x):=x-\sum_{k=1}^{\infty}\log\mathbb{E}[e^{x\xi_{k}}].
\end{equation}
\end{theorem}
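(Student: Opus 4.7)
The strategy is to apply the Gärtner-Ellis theorem after identifying the limiting logarithmic moment generating function of $S_{n}:=\sum_{k=1}^{n}X_{k}$ via a cluster (branching) representation of the INAR$(\infty)$ dynamics \eqref{main:dynamicsempty}.

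I view the process as immigration plus branching: at each time $m$ there are $\varepsilon_{m}$ i.i.d.\ immigrants, and each individual present at time $t$ independently produces $\xi_{k}$ offspring at time $t+k$ for every $k\ge 1$. Let $C_{j}$ denote the total progeny of a single immigrant counted over $j{+}1$ consecutive time steps (the immigrant itself plus all its descendants in generations $0,1,\dots,j$), so $C_{0}=1$ and the branching property yields
\[
 C_{j}\stackrel{d}{=} 1+\sum_{k=1}^{j}\sum_{l=1}^{\xi_{k}}C^{(k,l)}_{j-k},
\]
with the $C^{(k,l)}_{j-k}$ independent copies of $C_{j-k}$ and independent of the $\xi_{k}$'s. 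Setting $u_{j}(\theta):=\log\mathbb{E}[e^{\theta C_{j}}]$, taking logs gives the recursion $u_{j}(\theta)=\theta+\sum_{k=1}^{j}\log\mathbb{E}[e^{u_{j-k}(\theta)\xi_{k}}]$. A straightforward induction shows that, for every $\theta\le\theta_{c}$, $u_{j}(\theta)$ is non-decreasing in $j$ and bounded above by $f_{\infty}(\theta)$ (using that $F(f_{\infty}(\theta))=\theta$), so monotone convergence identifies the limit $u_{\infty}(\theta)=f_{\infty}(\theta)$; for $\theta>\theta_{c}$ the fixed-point equation has no real solution and $u_{j}(\theta)\to\infty$.

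Next, decomposing $S_{n}$ by immigrant, the immigrant arriving at time $m$ contributes exactly its truncated cluster $C_{n-m}$ to $S_{n}$, with distinct clusters mutually independent, so
\[
 S_{n}\stackrel{d}{=}\sum_{m=1}^{n}\sum_{i=1}^{\varepsilon_{m}}C^{(m,i)}_{n-m}.
\]
Writing $\psi(t):=\log\mathbb{E}[e^{t\varepsilon}]$, conditioning on $(\varepsilon_{m})$ and using independence gives
\[
 \log\mathbb{E}[e^{\theta S_{n}}]=\sum_{m=1}^{n}\log\mathbb{E}\bigl[e^{\varepsilon_{m}u_{n-m}(\theta)}\bigr]=\sum_{j=0}^{n-1}\psi\bigl(u_{j}(\theta)\bigr).
\]
Continuity of $\psi$ together with $u_{j}(\theta)\to f_{\infty}(\theta)$ (resp.\ $+\infty$) for $\theta\le\theta_{c}$ (resp.\ $\theta>\theta_{c}$), combined with a Cesaro argument, then yields
\[
 \Lambda(\theta):=\lim_{n\to\infty}\tfrac{1}{n}\log\mathbb{E}[e^{\theta S_{n}}]=\begin{cases}\log\mathbb{E}[e^{f_{\infty}(\theta)\varepsilon}], & \theta\le\theta_{c},\\ +\infty, & \theta>\theta_{c}.\end{cases}
\]

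Finally, I would verify the Gärtner-Ellis hypotheses. On $(-\infty,\theta_{c})$, smoothness of $f_{\infty}$ (hence of $\Lambda$) follows from the inverse function theorem applied to $F$ on the branch where $F'>0$, while steepness at $\theta_{c}$ is a consequence of $f_{\infty}'(\theta)=1/F'(f_{\infty}(\theta))\to\infty$ as $\theta\uparrow\theta_{c}$, since $F'$ vanishes at the maximizer of $F$. Gärtner-Ellis then delivers the stated LDP with $I(x)=\sup_{\theta}\{\theta x-\Lambda(\theta)\}=\sup_{\theta\le\theta_{c}}\{\theta x-\log\mathbb{E}[e^{f_{\infty}(\theta)\varepsilon}]\}$. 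The principal technical points I foresee are the monotone-convergence argument identifying $u_{j}(\theta)\uparrow f_{\infty}(\theta)$, in particular selecting the smaller root of $F=\theta$ (consistent with the initialization $u_{0}(\theta)=\theta$), and the essential-smoothness/steepness verification at the critical point $\theta_{c}$; the remaining cluster-MGF manipulations are routine.
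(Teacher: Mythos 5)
Your proposal is correct and, at its core, coincides with the paper's proof: both reduce the problem to the identity $\log\mathbb{E}[e^{\theta S_n}]=\sum_{k=1}^{n}\log\mathbb{E}[e^{f_k(\theta)\varepsilon}]$ with the same recursion $f_k(\theta)=\theta+\sum_{j=1}^{k-1}\log\mathbb{E}[e^{f_j(\theta)\xi_{k-j}}]$ (your $u_j$ equals $f_{j+1}$), identify $f_\infty(\theta)$ as the smaller root of $F(x)=\theta$, verify essential smoothness via $f_\infty'(\theta)=1/F'(f_\infty(\theta))\to\infty$ as $\theta\uparrow\theta_c$, and invoke G\"artner--Ellis. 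The one genuine difference is how the identity is obtained: you use the immigrant-cluster (branching) decomposition $S_n\stackrel{d}{=}\sum_{m=1}^{n}\sum_{i=1}^{\varepsilon_m}C^{(m,i)}_{n-m}$, so that $f_{j+1}(\theta)$ acquires the probabilistic meaning of the log-MGF of a truncated cluster, whereas the paper derives the same product formula by iteratively conditioning backward on the dynamics \eqref{main:dynamicsempty} with no reference to clusters. Your route makes the convergence $f_k\to f_\infty$ and the selection of the \emph{smaller} root transparent (the iteration starts from $u_0=\theta$ on the increasing branch of $F$), at the price of having to justify the cluster representation and the mutual independence of clusters; the paper's route is self-contained algebra but leaves the convergence of $f_k$ to $f_\infty$ essentially asserted. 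One small correction: your claim that $u_j(\theta)$ is non-decreasing in $j$ holds only for $\theta\ge 0$; for $\theta<0$ one has $C_j\le C_{j+1}$ pathwise, hence $u_j(\theta)$ is non-increasing and bounded \emph{below} by $f_\infty(\theta)<\theta$, so the monotone-convergence argument goes through after reversing the inequalities. This does not affect the conclusion.
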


\begin{remark}
Because that $F(x)$ is a concave function in $x$, so the equation $F(x)=\theta$
may have two solutions, we choose the smaller one because we consider for any $\theta\leq \theta_c$ and $\theta\uparrow\theta_c$. 
\end{remark}

Let us consider two special cases. In the first special case, $\varepsilon$ is Poisson with mean $\alpha_{0}$,
and $\xi_{k}$ are Poisson with mean $\alpha_{k}$, $k=1,2,\ldots$.
This special case corresponds to the discrete-time linear Hawkes process
studied in \cite{Wang2}. Then, we have the following corollary.

\begin{corollary}\label{Coll1:LDP}
Assume $\varepsilon$ is Poisson with mean $\alpha_{0}$,
and $\xi_{k}$ are Poisson with mean $\alpha_{k}$, $k=1,2,\ldots$ and $\Vert\alpha\Vert_{1}<1$.
$\mathbf{P}(\frac{1}{n}\sum_{k=1}^{n}X_{k}\in\cdot)$
satisfies a large deviation principle with speed $n$ and the corresponding rate function
\begin{equation}
I(x)=\sup_{\theta\leq\theta_{c}}\left\{\theta x-\alpha_{0}(e^{f_{\infty}(\theta)}-1)\right\},
\end{equation}
where $f_{\infty}(\theta)$ satisfies the equation
\begin{equation}
f_{\infty}(\theta)=\theta+\Vert\alpha\Vert_{1}\left(e^{f_{\infty}(\theta)}-1\right),
\end{equation}
for any $\theta\leq\theta_{c}=\Vert\alpha\Vert_{1}-\ln\Vert\alpha\Vert_{1}-1$.
\end{corollary}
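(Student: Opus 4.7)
The plan is to specialize Theorem \ref{thm:LDP} to the Poisson setting by an explicit moment generating function computation; no new machinery is needed, since Corollary \ref{Coll1:LDP} is just the evaluation of the generic formula in a case where everything simplifies.

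First I would verify that Assumption \ref{CLT-Assumption}(a) and (c) hold. For $\xi_k \sim \mathrm{Poisson}(\alpha_k)$ one has $\mathbb{E}[\xi_k]=\mathrm{Var}[\xi_k]=\alpha_k$, so $\Vert\mathbb{E}[\boldsymbol{\xi}]\Vert_1=\Vert\mathrm{Var}[\boldsymbol{\xi}]\Vert_1=\Vert\alpha\Vert_1<1$, giving (a); similarly $\mathbb{E}[\varepsilon]=\mathrm{Var}[\varepsilon]=\alpha_0<\infty$ gives (c). Thus Theorem \ref{thm:LDP} applies, yielding an LDP with speed $n$ and the generic rate function stated there.

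Next I would compute $F$ and $\theta_c$ explicitly. Using the Poisson moment generating function, $\log\mathbb{E}[e^{x\xi_k}]=\alpha_k(e^x-1)$, and since the $\alpha_k$ are nonnegative one can exchange sum and log (monotone convergence is sufficient) to obtain
\begin{equation*}
F(x)=x-\sum_{k=1}^{\infty}\alpha_k(e^x-1)=x-\Vert\alpha\Vert_1(e^x-1).
\end{equation*}
Since $F$ is strictly concave with $F'(x)=1-\Vert\alpha\Vert_1 e^x$, the maximum is attained at $x^\ast=-\log\Vert\alpha\Vert_1>0$, and substituting gives $\theta_c=F(x^\ast)=\Vert\alpha\Vert_1-\log\Vert\alpha\Vert_1-1$. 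The defining equation $F(x)=\theta$ rearranges to $x=\theta+\Vert\alpha\Vert_1(e^x-1)$, which is exactly the implicit equation stated for $f_\infty(\theta)$; picking the smaller root (per the remark) selects the branch with $f_\infty(\theta)\le x^\ast$, on which $f_\infty$ is continuous and increases up to $x^\ast$ as $\theta\uparrow\theta_c$.

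Finally I would plug in the Poisson MGF of $\varepsilon$, namely $\log\mathbb{E}[e^{f_\infty(\theta)\varepsilon}]=\alpha_0(e^{f_\infty(\theta)}-1)$, into the general formula of Theorem \ref{thm:LDP} to conclude
\begin{equation*}
I(x)=\sup_{\theta\le\theta_c}\bigl\{\theta x-\alpha_0(e^{f_\infty(\theta)}-1)\bigr\}.
\end{equation*}
There is no real obstacle here: the only points requiring a line of justification are (i) the interchange of $\sum_k$ and $\log\mathbb{E}[e^{x\xi_k}]$ in defining $F$, which follows from nonnegativity of the $\alpha_k$, and (ii) the identification of the correct branch of $f_\infty$, which is forced by strict concavity of $F$ and the convention already fixed in the remark following Theorem \ref{thm:LDP}.
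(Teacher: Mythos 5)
Your proposal is correct and follows essentially the same route as the paper: specialize Theorem \ref{thm:LDP} by computing the Poisson log-moment generating functions, which gives $F(x)=x-\Vert\alpha\Vert_{1}(e^{x}-1)$, $\theta_{c}=\Vert\alpha\Vert_{1}-\log\Vert\alpha\Vert_{1}-1$, and $\Gamma(\theta)=\alpha_{0}(e^{f_{\infty}(\theta)}-1)$. Your version is simply more explicit than the paper's (verifying the assumptions, locating the maximizer of $F$, and identifying the branch of $f_{\infty}$), all of which is consistent with the paper's argument.
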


\begin{remark}
Our Corollary~\ref{Coll1:LDP} recovers the large
deviations for the discrete-time Hawkes process
that is studied in \cite{Wang2}.  
\end{remark}

In the second special case, we assume $\xi_{k}\equiv 0$ for every $k\geq 2$,
so that it corresponds to the INAR(1) model.

\begin{corollary}\label{Coll2:LDP}
Assume the INAR(1) model where $\xi_{k}\equiv 0$ for every $k\geq 2$ and $\mathbb E[\xi_1]<1$.
$\mathbf{P}(\frac{1}{n}\sum_{k=1}^{n}X_{k}\in\cdot)$
satisfies a large deviation principle with speed $n$ and the corresponding rate function
\begin{equation}
I(x)=\sup_{\psi\in\mathbb{R}}\left\{x\left(\psi-\log\mathbb{E}[e^{\psi\xi_{1}}]\right)-\log\mathbb{E}[e^{\psi\varepsilon}]\right\}.
\end{equation}
\end{corollary}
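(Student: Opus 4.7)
The plan is to derive this corollary directly from Theorem~\ref{thm:LDP} by specializing to the INAR(1) setting and performing a change of variables. Setting $\xi_{k}\equiv 0$ for all $k\geq 2$ in the definition of $F$ from Theorem~\ref{thm:LDP} collapses it to
\begin{equation*}
F(\psi)=\psi-\log\mathbb{E}[e^{\psi\xi_{1}}],
\end{equation*}
which is concave in $\psi$ with $F'(0)=1-\mathbb{E}[\xi_{1}]>0$. Let $\psi_{c}$ denote the maximizer of $F$, so $\theta_{c}=F(\psi_{c})$. Since $f_{\infty}(\theta)$ is by definition the smaller root of $F(\psi)=\theta$, the map $\theta\mapsto f_{\infty}(\theta)$ is a bijection between $(-\infty,\theta_{c}]$ and the lower branch $(-\infty,\psi_{c}]$. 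Substituting $\psi=f_{\infty}(\theta)$ (so that $\theta=F(\psi)$) into the general rate function of Theorem~\ref{thm:LDP} turns it into
\begin{equation*}
I(x)=\sup_{\psi\leq\psi_{c}}\left\{x\left(\psi-\log\mathbb{E}[e^{\psi\xi_{1}}]\right)-\log\mathbb{E}[e^{\psi\varepsilon}]\right\}.
\end{equation*}

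The remaining step is to verify that the constraint $\psi\leq\psi_{c}$ can be dropped. For any $\psi>\psi_{c}$, the strict concavity of $F$ (in the nondegenerate case) produces a companion point $\widetilde{\psi}<\psi_{c}$ with $F(\widetilde{\psi})=F(\psi)$, so both points yield the same value $xF(\psi)$ for the first summand. Because $\varepsilon\geq 0$ almost surely, the map $\psi\mapsto\mathbb{E}[e^{\psi\varepsilon}]$ is nondecreasing, whence $\log\mathbb{E}[e^{\widetilde{\psi}\varepsilon}]\leq\log\mathbb{E}[e^{\psi\varepsilon}]$, and the objective at $\widetilde{\psi}$ dominates the objective at $\psi$. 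Consequently enlarging the supremum to all $\psi\in\mathbb{R}$ leaves its value unchanged, giving exactly the formula stated in the corollary.

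The main subtlety I would expect concerns degenerate tail behaviour: for instance, if $\mathbf{P}(\xi_{1}=0)=0$ then $F$ may fail to tend to $-\infty$ as $\psi\to-\infty$, so the lower branch does not parameterize every $\theta\leq\theta_{c}$, and one must be careful when invoking the two-sheeted structure of $F$. However, one always has $F(\psi)\leq\theta_{c}$ for every $\psi\in\mathbb{R}$, and the monotonicity of $\psi\mapsto\mathbb{E}[e^{\psi\varepsilon}]$ continues to show that any $\psi>\psi_{c}$ is dominated by a suitable $\widetilde{\psi}\leq\psi_{c}$. Hence the extension from $\psi\leq\psi_{c}$ to all of $\mathbb{R}$ is purely cosmetic, and the identification with the corollary's formula follows.
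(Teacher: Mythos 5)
Your proof follows the same route as the paper's: specialize $F$ to $\psi\mapsto\psi-\log\mathbb{E}[e^{\psi\xi_{1}}]$ and substitute $\psi=f_{\infty}(\theta)$ into the rate function of Theorem~\ref{thm:LDP}. The only difference is that you explicitly justify dropping the constraint $\psi\leq\psi_{c}$ (via the concavity of $F$ and the monotonicity of $\psi\mapsto\mathbb{E}[e^{\psi\varepsilon}]$, including the degenerate case $\mathbf{P}(\xi_{1}=0)=0$), a step the paper performs silently; this is a welcome tightening rather than a divergence.
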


\begin{remark} Our Corollary \ref{Coll2:LDP} recovers the large deviations for the INAR(1) model studied in \cite{Yu}.	For the nearly unstable INAR(1) model studied in \cite{LiJiangWang2018}, our Corollary \ref{Coll2:LDP} also recovers the large deviations with the given scaling in their theorem 1.1.

\end{remark}

%%%%%%%%%%%%%%%%%%%%%%%%%%%%%%%%%
\subsection{Moderate deviations for INAR$(\infty)$ processes}\label{Moderate deviations}

Before we state the main result for the moderate deviations of the INAR$(\infty)$ processes, recall the equation (\ref{eq:lln}), where $\mu$ denotes the limit in the law of large numbers for $\frac{1}{n}\sum_{k=1}^{n}X_{k}$.
We have the following moderate deviation principles for the INAR$(\infty)$ processes.

\begin{theorem}\label{Thm:MDP}
Assume that Assumption \ref{CLT-Assumption} $(a)(b1)(c)$.
For any Borel set $A$ and time sequence $c(n)$ such that $\sqrt{n}\ll c(n)\ll n$, we have the following moderate deviation principle.
\begin{equation}\label{MDP}
\begin{aligned}
-\inf_{x\in A^{o}}J(x)
&\leq\liminf_{n\rightarrow\infty}\frac{n}{c^2(n)}\log \mathbf{P}\left(\frac{\sum_{k=1}^{n}X_{k}-n\mu}{c(n)}\in A\right)\\
&\leq\limsup_{n\rightarrow\infty}\frac{n}{c^2(n)}\log \mathbf{P}\left(\frac{\sum_{k=1}^{n}X_{k}-n\mu}{c(n)}\in A\right)\leq-\inf_{x\in\overline{A}}J(x),
\end{aligned}
\end{equation}
where \begin{equation}
    J(x)=\frac{x^2(1-\Vert\mathbb{E}[\boldsymbol{\xi}]\Vert_{1})^3}
{2(\mathbb{E}[\varepsilon]\Vert{\rm Var}[\boldsymbol{\xi}]\Vert_{1}+{\rm Var}[\varepsilon](1-\Vert\mathbb{E}[\boldsymbol{\xi}]\Vert_{1}))},
\end{equation}
and $\mu$ is given in (\ref{eq:lln}) and $\boldsymbol{\xi}:=(\xi_1,\xi_2,\ldots)$ and ${\rm Var}[\boldsymbol{\xi}]:=({\rm Var}[\xi_1],{\rm Var}[\xi_2],\ldots)$.
\end{theorem}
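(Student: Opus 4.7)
The plan is to apply the Gärtner--Ellis theorem. Writing $S_{n}:=\sum_{k=1}^{n}X_{k}$, $\theta_{n}:=\theta c(n)/n$, and
$\sigma^{2}:=\bigl(\mathbb{E}[\varepsilon]\,\|\mathrm{Var}[\boldsymbol{\xi}]\|_{1}+\mathrm{Var}[\varepsilon](1-\|\mathbb{E}[\boldsymbol{\xi}]\|_{1})\bigr)/(1-\|\mathbb{E}[\boldsymbol{\xi}]\|_{1})^{3}$, the target is the scaled-cumulant limit
\begin{equation*}
\lim_{n\to\infty}\frac{n}{c^{2}(n)}\log\mathbb{E}\bigl[\exp(\theta_{n}(S_{n}-n\mu))\bigr]=\tfrac{1}{2}\theta^{2}\sigma^{2},\qquad\theta\in\mathbb{R}.
\end{equation*}
Since $\theta\mapsto\tfrac12\theta^{2}\sigma^{2}$ is finite and everywhere differentiable, its Fenchel--Legendre transform is exactly $J(x)=x^{2}/(2\sigma^{2})$, which matches the $J$ in the statement, and Gärtner--Ellis then delivers both the upper and lower MDP bounds.

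The first ingredient is to pin down the Taylor expansion at $\theta=0$ of the LDP cumulant $\Lambda(\theta):=\log\mathbb{E}[e^{f_{\infty}(\theta)\varepsilon}]$ appearing in Theorem~\ref{thm:LDP}. I would differentiate the implicit equation $F(f_{\infty}(\theta))=\theta$ twice, and use $F(0)=0$, $F'(0)=1-\|\mathbb{E}[\boldsymbol{\xi}]\|_{1}$, and $F''(0)=-\|\mathrm{Var}[\boldsymbol{\xi}]\|_{1}$ (the latter since the second derivative of $\log\mathbb{E}[e^{x\xi_{k}}]$ at $x=0$ equals $\mathrm{Var}[\xi_{k}]$) to obtain
\begin{equation*}
f_{\infty}(0)=0,\qquad f_{\infty}'(0)=\frac{1}{1-\|\mathbb{E}[\boldsymbol{\xi}]\|_{1}},\qquad f_{\infty}''(0)=\frac{\|\mathrm{Var}[\boldsymbol{\xi}]\|_{1}}{(1-\|\mathbb{E}[\boldsymbol{\xi}]\|_{1})^{3}}.
\end{equation*}
A chain-rule expansion of $\Lambda$ then delivers $\Lambda(0)=0$, $\Lambda'(0)=\mu$, and $\Lambda''(0)=\sigma^{2}$, identifying the correct quadratic coefficient.

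To lift this infinitesimal calculation to a quantitative MGF estimate at finite $n$, I would invoke the cluster (immigration-branching) representation that $\|\mathbb{E}[\boldsymbol{\xi}]\|_{1}<1$ enables: each immigrant $\varepsilon_{j}$ seeds an independent subcritical tree whose total progeny $T_{j}$ is i.i.d.\ across $j$, with $\mathbb{E}[T_{1}]=\mu$, $\mathrm{Var}[T_{1}]=\Lambda''(0)=\sigma^{2}$, and MGF $\mathbb{E}[e^{\theta T_{1}}]=e^{\Lambda(\theta)}$ finite on a neighborhood of $0$. Decompose $S_{n}=\tilde S_{n}-R_{n}$ where $\tilde S_{n}:=\sum_{j=1}^{n}T_{j}$ is the i.i.d.\ sum and $R_{n}:=\sum_{j=1}^{n}(T_{j}-T_{j}^{(n)})\ge 0$ collects descendants born strictly after time $n$. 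The MDP for $(\tilde S_{n}-n\mu)/c(n)$ with Gaussian rate $x^{2}/(2\sigma^{2})$ is then the classical Cramér-type MDP for i.i.d.\ sums, see~\cite[Theorem~3.7.1]{Dembo}.

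The main obstacle, and where Assumption~(b1) enters essentially, is the super-exponential negligibility of the boundary term:
\begin{equation*}
\lim_{n\to\infty}\frac{n}{c^{2}(n)}\log\mathbb{P}(|R_{n}|>\delta c(n))=-\infty,\qquad\forall\delta>0.
\end{equation*}
The expected progeny per immigrant $k$ generations later, $g(k)$, satisfies the subcritical renewal equation $g(k)=\sum_{j=1}^{k}\mathbb{E}[\xi_{j}]g(k-j)$ with $g(0)=1$, and a standard transfer argument shows that $\mathbb{E}[\xi_{k}]\le C_{1}k^{-3/2}$ propagates to $g(k)=O(k^{-3/2})$; hence $\mathbb{E}[R_{n}]=\mathbb{E}[\varepsilon]\sum_{j=1}^{n}\sum_{k>n-j}g(k)=O(\sqrt{n})=o(c(n))$, since $c(n)\gg\sqrt{n}$. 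An exponential Markov inequality applied to $R_{n}$, factorizing the MGF across the independent clusters and optimizing the tilt $\lambda\propto c(n)/n$, should then drive this probability super-exponentially to zero at the MDP scale. Exponential equivalence, see~\cite[Theorem~4.2.13]{Dembo}, finally transfers the MDP from $\tilde S_{n}$ to $S_{n}$, producing the rate function $J$ as stated.
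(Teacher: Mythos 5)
Your proposal is correct in outline but follows a genuinely different route from the paper. The paper works directly with the exact finite-$n$ product formula $\mathbb{E}[e^{\theta_n\sum_k X_k}]=\prod_{k=1}^n\mathbb{E}[e^{f_k(\theta_n)\varepsilon}]$ inherited from the LDP proof, expands $G_n(k)=f_k(\theta_n)$ to second order in $c(n)/n$ via the recursions for $\overline{G}_1(k)$ and $\overline{G}_2(k)$, proves their uniform boundedness (Lemmas~\ref{Lem1:MDP}, \ref{Lem2:MDP}), computes the Ces\`aro limits of $\frac1n\sum\overline{G}_1$, $\frac1n\sum\overline{G}_1^2$, $\frac1n\sum\overline{G}_2$, and then applies G\"artner--Ellis; Assumption (b1) enters to kill the centering error $\frac{\mathbb{E}[\varepsilon]\theta}{c(n)}\sum_k\overline{G}_1(k)-\frac{n\mu\theta}{c(n)}$ via $\sum_{i\ge n}\mathbb{E}[\xi_i]=O(n^{-1/2})$. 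You instead use the immigration--cluster decomposition $S_n=\tilde S_n-R_n$, obtain the MDP for the i.i.d.\ sum $\tilde S_n$ from the classical Cram\'er-type result, identify $\sigma^2=\Lambda''(0)=\mathrm{Var}[T_1]$ by implicit differentiation of $F(f_\infty(\theta))=\theta$ (your computation of $f_\infty''(0)$ and $\Lambda''(0)$ is correct and matches the paper's $J$), and transfer by exponential equivalence; here (b1) enters through the renewal estimate $g(k)=O(k^{-3/2})$ and $\mathbb{E}[R_n]=O(\sqrt n)=o(c(n))$ --- the same edge effect the paper controls, in a different guise. Your route buys reuse of off-the-shelf i.i.d.\ theory and a transparent interpretation of $\sigma^2$; the paper's route avoids the cluster representation and produces the finite-$n$ expansion that is recycled in the CLT proof. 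The one step you should not leave as a sketch is the super-exponential negligibility of $R_n$: the first-moment bound $\mathbb{E}[R_n]=o(c(n))$ is not by itself sufficient, and in the exponential Chebyshev bound with tilt $\lambda=\beta c(n)/n$ you must also show that the second-order contribution $\frac{n}{c^2(n)}\sum_j\bigl(\log\mathbb{E}[e^{\lambda D_j}]-\lambda\mathbb{E}[D_j]\bigr)$ vanishes (e.g.\ via $\frac1n\sum_j\mathbb{E}[D_j^2e^{\lambda D_j}]\to0$, using that $D_j\le T_j$ has a finite MGF near the origin and $D_j\to0$ as $n-j\to\infty$) before sending $\beta\to\infty$; with that supplied, your argument closes.
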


\begin{remark} 
We cannot have constant $\varepsilon$ and $\xi_k$ at the same time otherwise the model would be deterministic. But we can talk about what happens when one of them takes a constant.
\end{remark}

Next, let us consider the counterpart of Corollary~\ref{Coll1:LDP} and Corollary~\ref{Coll2:LDP} for the moderate deviations
for the special case of the discrete-time linear Hawkes process and the INAR(1) process.
Let us first consider the special case of the discrete-time linear Hawkes process, and we have the following result. 

\begin{corollary}\label{Coll1:MDP}
Assume that Assumption \ref{CLT-Assumption} $(a)(b1)(c)$. Assume $\varepsilon$ is Poisson with mean $\alpha_{0}(\neq0)$,
and $\xi_{k}$ are Poisson with mean $\alpha_{k}$, $k=1,2,\ldots$ and $\Vert\alpha\Vert_{1}<1$. Then for any Borel set $A$ and time sequence $c(n)$ such that $\sqrt{n}\ll c(n)\ll n$,
we have the moderate deviation principle (\ref{MDP}),
where \begin{equation}
    J(x)=\dfrac{x^2\left(1-\Vert \alpha\Vert_{1}\right)^3}{2\alpha_0}.
\end{equation}
\end{corollary}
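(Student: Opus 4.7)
The plan is to deduce this corollary as a direct specialization of Theorem~\ref{Thm:MDP}. The work splits into two parts: first, verifying that the hypotheses of Theorem~\ref{Thm:MDP} are inherited by the Poisson--Poisson model, and second, performing the algebraic simplification of the general rate function $J(x)$ using the mean/variance identities that characterize the Poisson distribution.

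For the first part, since Assumption~\ref{CLT-Assumption}$(a)(b1)(c)$ is imposed by hypothesis, I only need to confirm consistency with the Poisson parametrization. Having $\mathbb{E}[\xi_k]=\alpha_k$ gives $\Vert\mathbb{E}[\boldsymbol{\xi}]\Vert_{1}=\Vert\alpha\Vert_{1}<1$ as required in (a); ${\rm Var}[\xi_k]=\alpha_k$ together with $\Vert\alpha\Vert_{1}<\infty$ yields $\Vert{\rm Var}[\boldsymbol{\xi}]\Vert_{1}=\Vert\alpha\Vert_{1}<\infty$; and $\varepsilon\sim\mathrm{Poisson}(\alpha_0)$ with $\alpha_0<\infty$ gives $\mathbb{E}[\varepsilon]={\rm Var}[\varepsilon]=\alpha_0<\infty$, as required in (c). Condition (b1) is precisely $\sup_{n>0}n^{3/2}\alpha_n<\infty$, which is part of the standing hypothesis.

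For the second part, I substitute into the rate function of Theorem~\ref{Thm:MDP}. Since a Poisson random variable has mean equal to variance, $\Vert\mathbb{E}[\boldsymbol{\xi}]\Vert_{1}=\Vert{\rm Var}[\boldsymbol{\xi}]\Vert_{1}=\Vert\alpha\Vert_{1}$ and $\mathbb{E}[\varepsilon]={\rm Var}[\varepsilon]=\alpha_0$. Plugging these into the denominator gives
\begin{equation*}
2\bigl(\mathbb{E}[\varepsilon]\Vert{\rm Var}[\boldsymbol{\xi}]\Vert_{1}+{\rm Var}[\varepsilon](1-\Vert\mathbb{E}[\boldsymbol{\xi}]\Vert_{1})\bigr)=2\bigl(\alpha_0\Vert\alpha\Vert_{1}+\alpha_0(1-\Vert\alpha\Vert_{1})\bigr)=2\alpha_0,
\end{equation*}
while the numerator becomes $x^2(1-\Vert\alpha\Vert_{1})^3$. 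Theorem~\ref{Thm:MDP} then yields the stated moderate deviation principle with $J(x)=x^2(1-\Vert\alpha\Vert_{1})^3/(2\alpha_0)$. There is no real obstacle here; the only delicate point is the cancellation in the denominator, which relies crucially on the Poisson identity mean $=$ variance, and the nondegeneracy hypothesis $\alpha_0\neq 0$ is what ensures the resulting rate function is well defined.
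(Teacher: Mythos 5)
Your proposal is correct and follows exactly the paper's own route: verify the Poisson identities $\mathbb{E}[\varepsilon]={\rm Var}[\varepsilon]=\alpha_0$ and $\Vert\mathbb{E}[\boldsymbol{\xi}]\Vert_{1}=\Vert{\rm Var}[\boldsymbol{\xi}]\Vert_{1}=\Vert\alpha\Vert_{1}$, then substitute into the rate function of Theorem~\ref{Thm:MDP} so the denominator collapses to $2\alpha_0$. Your additional remarks checking consistency with Assumption~\ref{CLT-Assumption} are harmless elaboration of what the paper leaves implicit.
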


\begin{remark} Our Corollary~\ref{Coll1:MDP} recovers the moderate deviations for the discrete-time Hawkes process
that is studied in \cite{Wang2}. \end{remark}

In the second special case, we consider INAR(1) model
where we assume $\xi_{k}\equiv 0$ for every $k\geq 2$.

\begin{corollary}\label{Coll2:MDP}
Assume that Assumption \ref{CLT-Assumption} $(a)(b1)(c)$. Assume the INAR(1) model where $\xi_{k}\equiv 0$ for every $k\geq 2$ and $\mathbb E[\xi_1]<1$.
Then for any Borel set $A$ and time sequence $c(n)$ such that $\sqrt{n}\ll c(n)\ll n$, we have the moderate deviation principle (\ref{MDP}),
where \begin{equation}
    J(x)=\dfrac{x^2\left(1-\mathbb{E}[\xi_1]\right)^3}{2\left(\mathbb{E}[\varepsilon]{\rm Var}[\xi_1]+{\rm Var}[\varepsilon](1-\mathbb{E}[\xi_1])\right)}.
\end{equation}
\end{corollary}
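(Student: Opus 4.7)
The plan is to obtain Corollary~\ref{Coll2:MDP} as a direct specialization of Theorem~\ref{Thm:MDP} to the INAR(1) regime, so the only work is to verify the hypotheses and to simplify the norms appearing in the rate function. First I would check that Assumption~\ref{CLT-Assumption} is satisfied in this setting. Part~(a), which requires $\Vert\mathbb{E}[\boldsymbol{\xi}]\Vert_{1}<1$ and $\Vert{\rm Var}[\boldsymbol{\xi}]\Vert_{1}<\infty$, follows from the standing hypothesis $\mathbb{E}[\xi_{1}]<1$ together with the implicit assumption ${\rm Var}[\xi_{1}]<\infty$ inherited from Assumption~\ref{CLT-Assumption}(a). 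Part~(b1), namely $\sup_{n>0}n^{3/2}\mathbb{E}[\xi_{n}]<\infty$, is automatic because $\xi_{k}\equiv 0$ for every $k\geq 2$, so the supremum collapses to $\mathbb{E}[\xi_{1}]$, which is finite. Part~(c) is a hypothesis of the corollary itself.

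Next I would carry out the simplification of the rate function. Under the reduction $\xi_{k}\equiv 0$ for $k\geq 2$ we have $\boldsymbol{\xi}=(\xi_{1},0,0,\ldots)$, hence
\begin{equation*}
\Vert\mathbb{E}[\boldsymbol{\xi}]\Vert_{1}=\mathbb{E}[\xi_{1}],\qquad \Vert{\rm Var}[\boldsymbol{\xi}]\Vert_{1}={\rm Var}[\xi_{1}].
\end{equation*}
Plugging these identities into the general rate function of Theorem~\ref{Thm:MDP} gives
\begin{equation*}
J(x)=\frac{x^{2}(1-\mathbb{E}[\xi_{1}])^{3}}{2\bigl(\mathbb{E}[\varepsilon]{\rm Var}[\xi_{1}]+{\rm Var}[\varepsilon](1-\mathbb{E}[\xi_{1}])\bigr)},
\end{equation*}
which matches the expression claimed in the corollary. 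Consequently the law of large numbers constant $\mu=\mathbb{E}[\varepsilon]/(1-\mathbb{E}[\xi_{1}])$ and the moderate deviation upper and lower bounds in~\eqref{MDP} transfer verbatim from Theorem~\ref{Thm:MDP}.

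There is no genuine obstacle beyond bookkeeping: the only subtle point is making sure that (b1) is not vacuously violated. Since (b1) is merely a summability/tail condition on the reproduction coefficients $(\mathbb{E}[\xi_{n}])_{n\geq 1}$, and this tail is identically zero in the INAR(1) case, the condition holds trivially and the corollary is an immediate corollary in the literal sense.
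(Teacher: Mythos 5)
Your proposal is correct and follows essentially the same route as the paper: specialize Theorem~\ref{Thm:MDP} by computing $\Vert\mathbb{E}[\boldsymbol{\xi}]\Vert_{1}=\mathbb{E}[\xi_{1}]$ and $\Vert{\rm Var}[\boldsymbol{\xi}]\Vert_{1}={\rm Var}[\xi_{1}]$ and substituting into the general rate function. Your additional remarks verifying that Assumption~\ref{CLT-Assumption}(b1) holds trivially are a welcome (if minor) elaboration the paper omits.
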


\begin{remark}
It is easy to see that our Corollary~\ref{Coll2:MDP} recovers the results of Theorem 1.2 in \cite{Yu}. But for the nearly unstable INAR(1) model studied in \cite{LiJiangWang2018}, our Corollary \ref{Coll2:MDP} can not recover the moderate deviations in their theorem 1.2.
\end{remark}

%%%%%%%%%%%%%%%%%%%%%%%%%%%%%%%%%%%%%%%%%

\subsection{Law of large numbers of INAR$(\infty)$ processes}\label{sec:LLN}

For the  INAR$(\infty)$ process, we have the following law of large numbers (LLN).

\begin{theorem}\label{thm:LLN} Assume that Assumption \ref{CLT-Assumption} $(a),(c)$.
\begin{equation}
\label{eq:lln:thm}
\lim_{n\rightarrow\infty}\frac{1}{n}{\sum_{k=1}^{n}X_{k}}=\mu:=\frac{\mathbb{E}[\varepsilon]}{1-\Vert\mathbb{E}[\boldsymbol{\xi}]\Vert_{1}},
\end{equation}
almost surely as $n\rightarrow\infty$.
\end{theorem}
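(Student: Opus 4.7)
Let $S_n=\sum_{k=1}^{n}X_{k}$. The plan is to prove $\mathbb{E}[S_n]/n\to\mu$ by a discrete renewal computation, then to bound $\mathrm{Var}(S_n)=O(n)$ by exposing the martingale structure of the centered process $X_n-\mathbb{E}[X_n]$, and finally to upgrade the resulting convergence in probability to almost sure convergence via Chebyshev and Borel--Cantelli along the subsequence $n_k=k^{2}$, using the monotonicity of $S_n$ (automatic since $X_n\ge 0$) to sandwich the full sequence between consecutive terms of the subsequence.

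\textbf{Mean and variance.} Taking expectations in \eqref{main:dynamicsempty} produces the discrete renewal equation
\begin{equation*}
m_n:=\mathbb{E}[X_n]=\mathbb{E}[\varepsilon]+\sum_{k=1}^{n-1}a_k\,m_{n-k},\qquad a_k:=\mathbb{E}[\xi_k],
\end{equation*}
with solution $m_n=\mathbb{E}[\varepsilon]\sum_{j=0}^{n-1}u_j$, where $u_j:=\sum_{g\ge 0}a^{\ast g}(j)$ is the renewal potential of the sub-probability kernel $(a_k)$. Under Assumption~\ref{CLT-Assumption}(a), $K:=\sum_k a_k<1$, so $\|u\|_1=\sum_{j\ge 0}u_j=(1-K)^{-1}<\infty$, whence $m_n\to\mu$ and $\mathbb{E}[S_n]/n\to\mu$ by Ces\`aro. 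For the variance, set $\mathcal{H}_{n-1}:=\sigma(X_1,\dots,X_{n-1})$, $M_n:=X_n-\mathbb{E}[X_n\mid\mathcal{H}_{n-1}]$ and $\tilde X_n:=X_n-m_n$; a short computation from \eqref{main:dynamicsempty} gives $\tilde X_n=M_n+\sum_{k=1}^{n-1}a_k\tilde X_{n-k}$, whose convolution solution is $\tilde X_n=\sum_{i=1}^{n}u_{n-i}M_i$. Since $(M_n)$ is a square-integrable martingale-difference sequence with $\mathbb{E}[M_n^{2}\mid\mathcal{H}_{n-1}]=\mathrm{Var}[\varepsilon]+\sum_{k=1}^{n-1}\mathrm{Var}[\xi_k]\,X_{n-k}$, Assumption~\ref{CLT-Assumption}(a)(c) combined with $\sup_n m_n\le\mu$ yields $\sup_n\mathbb{E}[M_n^{2}]\le C<\infty$, and orthogonality of the $M_i$ gives
\begin{equation*}
\mathrm{Var}(S_n)=\sum_{i=1}^{n}\mathbb{E}[M_i^{2}]\Big(\sum_{l=0}^{n-i}u_l\Big)^{2}\;\le\; C\,\|u\|_1^{2}\,n.
\end{equation*}

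\textbf{Almost sure upgrade and main obstacle.} Chebyshev then gives $\mathbb{P}(|S_n/n-\mathbb{E}[S_n]/n|>\epsilon)\le C'/(n\epsilon^{2})$, which is summable along $n_k=k^{2}$, so $S_{n_k}/n_k\to\mu$ almost surely by Borel--Cantelli. For $n_k\le n<n_{k+1}$, monotonicity of $S_n$ gives
\begin{equation*}
\frac{n_k}{n_{k+1}}\cdot\frac{S_{n_k}}{n_k}\;\le\;\frac{S_n}{n}\;\le\;\frac{n_{k+1}}{n_k}\cdot\frac{S_{n_{k+1}}}{n_{k+1}},
\end{equation*}
and $n_{k+1}/n_k\to 1$ squeezes the full sequence to $\mu$ almost surely. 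I expect the bulk of the technical work to sit in the variance step: because $(a_k)$ is only assumed summable with no decay rate beyond $K<1$, the covariances $\mathrm{Cov}(X_i,X_j)$ are not exponentially small in $|i-j|$ and have to be controlled solely through $\|u\|_1<\infty$, which is what forces the detour through the martingale representation $\tilde X_n=\sum_i u_{n-i}M_i$ rather than a direct recursion on second moments. Should that route prove fiddly, a conceptually cleaner alternative is to construct the two-sided stationary INAR$(\infty)$ process via the Galton--Watson cluster representation (each immigrant spawns an a.s.\ finite tree of expected size $(1-K)^{-1}$), apply Birkhoff's ergodic theorem on the shift-invariant i.i.d.\ driving noise, and couple to the empty-history process using the tail decay $\mathbb{E}[\varepsilon]\sum_{j\ge n}u_j\to 0$ of the omitted immigrant contributions.
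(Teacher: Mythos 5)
Your proposal is correct, and while it shares the paper's central device --- the martingale differences $M_n=X_n-\mathbb{E}[X_n\mid\mathcal{F}_{n-1}]$ and an $O(n)$ second-moment bound leading first to convergence in probability --- it diverges from the paper in two substantive ways. For the $L^2$ step, the paper sums the martingale and rearranges it as $(1-\Vert\mathbb{E}[\boldsymbol{\xi}]\Vert_{1})\sum_{i=1}^nX_i-n\mathbb{E}[\varepsilon]+\epsilon_n$, then separately kills the boundary term $\epsilon_n/n$ using the uniform bound on $\mathbb{E}[X_i]$ and Markov's inequality; you instead invert the centered recursion through the renewal potential $u=\sum_g a^{\ast g}$ to get the exact representation $S_n-\mathbb{E}[S_n]=\sum_i M_i\sum_{l=0}^{n-i}u_l$ and read off $\mathrm{Var}(S_n)\le C\Vert u\Vert_1^2 n$ from orthogonality --- cleaner, and it absorbs the paper's $\epsilon_n$ bookkeeping into the identity $\Vert u\Vert_1=(1-\Vert\mathbb{E}[\boldsymbol{\xi}]\Vert_1)^{-1}$. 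The more significant difference is the almost-sure upgrade: the paper invokes its large deviation principle (Theorem~\ref{thm:LDP}) to obtain summable exponential tails and then applies Borel--Cantelli, whereas you use Chebyshev along $n_k=k^2$ together with monotonicity of $S_n$ and the squeeze $n_{k+1}/n_k\to1$. Your route is not only more elementary but better matched to the hypotheses: Theorem~\ref{thm:LLN} assumes only Assumption~\ref{CLT-Assumption}(a),(c), i.e.\ finite first and second moments, while the LDP argument implicitly requires $\mathbb{E}[e^{\theta\varepsilon}]$ and $\mathbb{E}[e^{x\xi_k}]$ to be finite for the function $F$ and the limit $\Gamma$ to make sense, which those assumptions do not guarantee. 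Your alternative sketch via the cluster representation and Birkhoff's theorem would also work but is unnecessary given that the subsequence argument already closes the proof.
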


Let us consider two special cases. In the first special case, $\varepsilon$ is Poisson with mean $\alpha_{0}$,
and $\xi_{k}$ are Poisson with mean $\alpha_{k}$, $k=1,2,\ldots$.
This special case corresponds to the discrete-time linear Hawkes process
studied in \cite{Wang1}. Then, we have the following corollary.

\begin{corollary}\label{Coll1:LLN}
Assume $\varepsilon$ is Poisson with mean $\alpha_{0}$,
and $\xi_{k}$ are Poisson with mean $\alpha_{k}$, $k=1,2,\ldots$ and $\Vert\alpha\Vert_{1}<1$.
\begin{equation}
\label{eq:lln:Discrete Hawkes}
\lim_{n\rightarrow\infty}\frac{1}{n}{\sum_{k=1}^{n}X_{k}}=\mu:=\frac{\alpha_{0}}{1-\Vert\alpha\Vert_{1}},
\end{equation}
almost surely as $n\rightarrow\infty$.

\end{corollary}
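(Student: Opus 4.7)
\textbf{Proof proposal for Corollary~\ref{Coll1:LLN}.} The plan is to obtain this corollary as a direct specialization of Theorem~\ref{thm:LLN}, so the only real work is checking that Assumption~\ref{CLT-Assumption}(a),(c) holds under the Poisson hypotheses and then substituting the Poisson means into the LLN formula \eqref{eq:lln:thm}. No new dynamical or ergodic argument is needed; the Poisson structure merely gives explicit expressions for the means and variances that appear in Theorem~\ref{thm:LLN}.

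First I would verify Assumption~\ref{CLT-Assumption}(a). Since $\xi_k$ is Poisson with mean $\alpha_k$, we have $\mathbb{E}[\xi_k]=\alpha_k$ and $\mathrm{Var}[\xi_k]=\alpha_k$. Hence
\[
\Vert\mathbb{E}[\boldsymbol{\xi}]\Vert_{1}=\sum_{k=1}^{\infty}\alpha_k=\Vert\alpha\Vert_{1}<1
\]
by hypothesis, and $\Vert\mathrm{Var}[\boldsymbol{\xi}]\Vert_{1}=\Vert\alpha\Vert_{1}<\infty$, so condition (a) is met. Next I would verify (c): $\varepsilon$ is Poisson with mean $\alpha_0$, giving $\mathbb{E}[\varepsilon]=\mathrm{Var}[\varepsilon]=\alpha_0<\infty$. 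With both conditions in force, Theorem~\ref{thm:LLN} applies and yields
\[
\lim_{n\to\infty}\frac{1}{n}\sum_{k=1}^{n}X_{k}=\frac{\mathbb{E}[\varepsilon]}{1-\Vert\mathbb{E}[\boldsymbol{\xi}]\Vert_{1}}=\frac{\alpha_0}{1-\Vert\alpha\Vert_{1}}
\]
almost surely, which is exactly \eqref{eq:lln:Discrete Hawkes}.

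There is essentially no obstacle: the corollary is a one-line substitution once Theorem~\ref{thm:LLN} is in hand. The only subtlety worth a remark is that the Poisson specialization does not require any stronger tail condition on the reproduction coefficients beyond summability, which is already built into $\Vert\alpha\Vert_{1}<1$; in particular conditions (b1)--(b2) of Assumption~\ref{CLT-Assumption} play no role here, since the LLN relies solely on the first-moment condition (a) together with the finite mean in (c).
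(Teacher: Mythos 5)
Your proposal is correct and follows essentially the same route as the paper: specialize Theorem~\ref{thm:LLN} by computing $\Vert\mathbb{E}[\boldsymbol{\xi}]\Vert_{1}=\Vert\alpha\Vert_{1}$ and substituting $\mathbb{E}[\varepsilon]=\alpha_{0}$. Your explicit verification of the variance parts of Assumption~\ref{CLT-Assumption}(a),(c) is a small extra check the paper leaves implicit, but it does not change the argument.
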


\begin{remark}
Our Corollary~\ref{Coll1:LLN} recovers the law of large number for the discrete-time Hawkes process[Theorem 2.1] that is studied in \cite{Wang1}.  
\end{remark}

In the second special case, we assume $\xi_{k}\equiv 0$ for every $k\geq 2$,
so that it corresponds to the INAR(1) model.

\begin{corollary}\label{Coll2:LLN}
Assume the INAR(1) model where $\xi_{k}\equiv 0$ for every $k\geq 2$. Assume that $\mathbb E[\xi_1]<1$ and $\mathbb{E}[\varepsilon]<\infty$.
\begin{equation}
\label{eq:lln:thm}
\lim_{n\rightarrow\infty}\frac{1}{n}{\sum_{k=1}^{n}X_{k}}=\mu:=\frac{\mathbb{E}[\varepsilon]}{1-\mathbb{E}[\xi_1]},
\end{equation}
almost surely as $n\rightarrow\infty$.
\end{corollary}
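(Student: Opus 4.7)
The plan is to deduce Corollary \ref{Coll2:LLN} as the INAR(1) specialization of Theorem \ref{thm:LLN}. When $\xi_k \equiv 0$ for every $k \geq 2$, the recursion \eqref{main:dynamicsempty} collapses to the INAR(1) recursion $X_n = \varepsilon_n + \sum_{l=1}^{X_{n-1}}\xi_l^{(n,1)}$, and the infinite $\ell_1$ norm reduces to a single term: $\Vert\mathbb{E}[\boldsymbol{\xi}]\Vert_1 = \mathbb{E}[\xi_1]$. The hypothesis $\mathbb{E}[\xi_1]<1$ therefore matches the subcritical condition of Theorem \ref{thm:LLN}, and substituting into its limit formula immediately yields $\mu = \mathbb{E}[\varepsilon]/(1-\mathbb{E}[\xi_1])$, as claimed.

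The main obstacle is that Assumption \ref{CLT-Assumption}(a),(c) used in Theorem \ref{thm:LLN} also demand $\Vert{\rm Var}[\boldsymbol{\xi}]\Vert_1<\infty$ and ${\rm Var}[\varepsilon]<\infty$, while Corollary \ref{Coll2:LLN} imposes only first moments. To close this gap without reproving the full theorem, I would exploit the fact that in the INAR(1) case $(X_n)_{n\geq 0}$ is a time-homogeneous irreducible Markov chain on $\mathbb{Z}_+$ (assuming $\mathbb{P}(\varepsilon=0)$ and the law of $\varepsilon$ allow reaching each state from $0$, which is the generic situation; otherwise one works on the recurrent class). The Lyapunov drift
\[
\mathbb{E}[X_n \mid X_{n-1}=x] - x = \mathbb{E}[\varepsilon] - x(1-\mathbb{E}[\xi_1])
\]
is strictly negative outside a finite set since $1-\mathbb{E}[\xi_1]>0$, so Foster's criterion yields positive recurrence and a unique stationary distribution $\pi$. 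Stationarity forces $\mathbb{E}_\pi[X] = \mathbb{E}[\varepsilon] + \mathbb{E}[\xi_1]\mathbb{E}_\pi[X]$, hence $\mathbb{E}_\pi[X]=\mu$. The Markov chain ergodic theorem applied to the integrable function $x\mapsto x$ then delivers $\tfrac{1}{n}\sum_{k=1}^n X_k \to \mu$ almost surely from the empty-history initial condition $X_0=0$, independently of any second-moment assumption, which is exactly the stated conclusion.
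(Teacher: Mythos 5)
Your proposal is correct in outline but follows a genuinely different route from the paper. The paper's own proof of Corollary \ref{Coll2:LLN} is a one-line specialization: it computes $\Vert\mathbb{E}[\boldsymbol{\xi}]\Vert_{1}=\mathbb{E}[\xi_1]$ and invokes Theorem \ref{thm:LLN} directly, whose proof runs through the martingale decomposition \eqref{martingale decomposition}, the second-moment bound \eqref{martingale-difference-square}, and a Borel--Cantelli upgrade via the LDP. That argument silently imports Assumption \ref{CLT-Assumption}(a),(c), i.e.\ ${\rm Var}[\xi_1]<\infty$ and ${\rm Var}[\varepsilon]<\infty$, which the corollary as stated does not assume --- a mismatch the paper does not address. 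Your Markov-chain route (Foster--Lyapunov drift, positive recurrence, stationary mean, ergodic theorem for $f(x)=x$) genuinely closes that gap and needs only first moments; it is essentially the argument of Pakes (1971), which the paper's remark cites as the result being recovered. Two points deserve one more line each in a full write-up. First, the assertion that ``stationarity forces $\mathbb{E}_\pi[X]=\mathbb{E}[\varepsilon]+\mathbb{E}[\xi_1]\mathbb{E}_\pi[X]$'' only pins down $\mathbb{E}_\pi[X]=\mu$ once you know $\mathbb{E}_\pi[X]<\infty$ (the equation is vacuous at $+\infty$); this follows, e.g., from Fatou's lemma combined with the uniform bound $\mathbb{E}[X_n]\le\mathbb{E}[\varepsilon]/(1-\mathbb{E}[\xi_1])$, which the paper itself establishes in the proof of Theorem \ref{thm:LLN}. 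Second, since the chain starts at $0$, which need not lie in a recurrent class, you should note that the drift condition forces infinitely many returns to a finite set, hence almost-sure absorption into some positive recurrent class; the affine form of the one-step mean map then gives the same stationary mean $\mu$ on every such class, so the limit of the ergodic average is $\mu$ regardless of which class is entered. The trade-off is clear: the paper's proof is shorter but inherits stronger hypotheses from the general theorem, while yours is longer, specific to INAR(1), and matches the stated first-moment assumptions exactly.
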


\begin{remark} Our Corollary \ref{Coll2:LLN} recovers the law of large numbers for the INAR(1) model[Lemma 1] studied in \cite{Pakes1971}.
\end{remark}

%%%%%%%%%%%%%%%%%%%%%%%%%%%%%%%%%%
\subsection{Central limit theorem for INAR$(\infty)$ processes}\label{Central limit theorem}

For the  INAR$(\infty)$ process, we have the following central limit theorem (CLT).

\begin{theorem}\label{Thm:CLT} Under Assumption \ref{CLT-Assumption} $(a)(b2)(c)$.
\begin{equation}\label{eq:CLT}
\frac{\sum_{k=1}^{n}X_{k}-n\mu}{\sqrt{n}}\rightarrow N\left(0, \frac{\mathbb{E}[\varepsilon]\Vert{\rm Var}[\boldsymbol{\xi}]\Vert_{1}+{\rm Var}[\varepsilon](1-\Vert\mathbb{E}[\boldsymbol{\xi}]\Vert_{1}))}
{(1-\Vert\mathbb{E}[\boldsymbol{\xi}]\Vert_{1})^3} \right)
\end{equation}
in distribution as $n\rightarrow \infty$.
\end{theorem}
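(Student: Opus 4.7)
The strategy is to extract a martingale from $X_n$, show that the centered partial sum is asymptotically proportional to that martingale, and conclude via a martingale CLT. Let $\mathcal{F}_n$ denote the natural filtration generated by $(\varepsilon_k,\xi_l^{(k,j)})_{k\le n,\,j,l\ge 1}$, set $h_k:=\mathbb{E}[\xi_k]$ and $\lambda_n:=\mathbb{E}[X_n]$, and define
\begin{equation*}
M_n := X_n - \mathbb{E}[X_n\mid\mathcal{F}_{n-1}] = X_n - \mathbb{E}[\varepsilon] - \sum_{k=1}^{n-1} h_k X_{n-k}.
\end{equation*}
Then $(M_n)$ is a martingale difference sequence and the centered process $Y_n:=X_n-\lambda_n$ satisfies the linear recursion $Y_n = M_n + \sum_{k=1}^{n-1} h_k Y_{n-k}$. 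Iterating gives the renewal representation $Y_n = \sum_{k=0}^{n-1} a_k M_{n-k}$ with $a_0:=1$ and $a_k:=\sum_{j=1}^{k} h_j a_{k-j}$, and summation-by-parts yields
\begin{equation*}
S_N := \sum_{n=1}^N Y_n = \sum_{m=1}^N A_{N-m}M_m,\qquad A_\ell:=\sum_{k=0}^\ell a_k,\qquad A_\ell\uparrow A_\infty=\tfrac{1}{1-\Vert\mathbb{E}[\boldsymbol{\xi}]\Vert_1}.
\end{equation*}

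\textbf{Controlling residuals.} Write $S_N = A_\infty\sum_{m=1}^N M_m - R_N$ with $R_N:=\sum_{m=1}^N(A_\infty-A_{N-m})M_m$ and $\Lambda_N:=\sum_{k=1}^N\lambda_k-N\mu$. Orthogonality of the martingale differences together with $\mathbb{E}[M_m^2\mid\mathcal{F}_{m-1}] = \mathrm{Var}(\varepsilon)+\sum_{k=1}^{m-1}\mathrm{Var}(\xi_k)X_{m-k}$ and $\sup_m\lambda_m<\infty$ give
\begin{equation*}
\mathbb{E}[R_N^2] \le C\sum_{\ell=0}^{N-1}\Bigl(\sum_{k>\ell}a_k\Bigr)^{\!2}.
\end{equation*}
Assumption~\ref{CLT-Assumption}(b2) yields $h_k=O(k^{-a})$ with $a>3/2$, which in the subcritical regime $\Vert\mathbb{E}[\boldsymbol{\xi}]\Vert_1<1$ propagates through the renewal equation to $a_k=O(k^{-a})$, hence $\sum_{k>\ell}a_k=O(\ell^{-(a-1)})$; since $2(a-1)>1$ the sum above is $O(1)$, so $R_N = O_P(1) = o_P(\sqrt N)$. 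The same input applied to the discrepancy $r_n:=\lambda_n-\mu$, which satisfies $r_n = -\mu\sum_{k\ge n}h_k + \sum_{k=1}^{n-1}h_k r_{n-k}$, gives $r_n=O(n^{-(a-1)})$ and therefore $\Lambda_N=O(N^{2-a})=o(\sqrt N)$, precisely under $a>3/2$.

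\textbf{Martingale CLT and conclusion.} It remains to prove $N^{-1/2}\sum_{m=1}^N M_m \Rightarrow N(0,v)$ with $v:=\mathrm{Var}(\varepsilon)+\mu\Vert\mathrm{Var}(\boldsymbol{\xi})\Vert_1$. The conditional variance
\begin{equation*}
\frac{1}{N}\sum_{m=1}^N \mathbb{E}[M_m^2\mid\mathcal{F}_{m-1}] = \mathrm{Var}(\varepsilon) + \frac{1}{N}\sum_{m=1}^N\sum_{k=1}^{m-1}\mathrm{Var}(\xi_k)\,X_{m-k}
\end{equation*}
converges almost surely to $v$ by swapping the order of summation, truncating the inner series at $k\le K$, and applying Theorem~\ref{thm:LLN}, letting $K\to\infty$ via the summability $\Vert\mathrm{Var}(\boldsymbol{\xi})\Vert_1<\infty$ of Assumption~\ref{CLT-Assumption}(a). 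A conditional Lindeberg condition is verified by a direct moment computation on the compound sum $M_m$ combined with a uniform-integrability argument for $(M_m^2)$ (the distributional tightness of $X_m$ following from boundedness of $\lambda_m$ and $\mathrm{Var}(X_m)$). The Hall--Heyde martingale CLT then gives the desired limit and, combined with the residual estimates,
\begin{equation*}
\frac{\sum_{k=1}^N X_k - N\mu}{\sqrt N} = A_\infty\cdot\frac{\sum_{m=1}^N M_m}{\sqrt N} + o_P(1) \;\Rightarrow\; N\bigl(0,\,A_\infty^2 v\bigr);
\end{equation*}
substituting $A_\infty = (1-\Vert\mathbb{E}[\boldsymbol{\xi}]\Vert_1)^{-1}$ and $\mu(1-\Vert\mathbb{E}[\boldsymbol{\xi}]\Vert_1)=\mathbb{E}[\varepsilon]$ reproduces the variance in (\ref{eq:CLT}).

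\textbf{Main obstacle.} The crux is the propagation of polynomial decay from $(h_k)$ to the renewal sequence $(a_k)$ in the subcritical regime, and the consequent quadratic bounds on $R_N$ and $\Lambda_N$: these two residual estimates are what pin down the threshold $a>3/2$ in Assumption~\ref{CLT-Assumption}(b2), and any weaker decay leaves a remainder that is not $o_P(\sqrt N)$. By contrast, once the decay of $(a_k)$ is in hand the martingale CLT itself is routine, using only the second-moment hypotheses in Assumption~\ref{CLT-Assumption}(a),(c) together with a standard truncation to secure the Lindeberg condition.
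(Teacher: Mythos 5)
Your argument is essentially correct but follows a genuinely different route from the paper. The paper proves the CLT analytically: it reuses the exact product formula $\mathbb{E}[e^{\theta\sum_{k\le n}X_k}]=\prod_{k=1}^n\mathbb{E}[e^{f_k(\theta)\varepsilon}]$ from the large-deviation proof, sets $\theta\mapsto\theta/\sqrt n$, performs a second-order Taylor expansion of $f_k$ in terms of the sequences $\overline G_1(k),\overline G_2(k)$ (controlled by Lemmas~\ref{Lem1:MDP}--\ref{Lem2:MDP}), and shows the moment generating function converges to $e^{\theta^2\sigma^2/2}$; the role of Assumption~\ref{CLT-Assumption}(b2) there is to make the centering error $n^{-1/2}(\sum_k\overline G_1(k)-n/(1-\Vert\mathbb{E}[\boldsymbol{\xi}]\Vert_1))$ vanish via the tail bound $\sum_{i\ge n}\mathbb{E}[\xi_i]=O(n^{1-a})$. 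You instead extract the martingale differences $M_n$, solve the resulting linear recursion to get the renewal representation $S_N=\sum_m A_{N-m}M_m$, and invoke the Hall--Heyde CLT; your residuals $R_N$ and $\Lambda_N$ play exactly the role of the paper's centering error, and you correctly identify $a>3/2$ as the threshold in both places. Your route is arguably more probabilistically transparent and avoids any appeal to MGF convergence (it would survive under weaker integrability than exponential moments of $\varepsilon$), at the cost of two technical steps you assert rather than prove: (i) the propagation $h_k=O(k^{-a})\Rightarrow\sum_{k>\ell}a_k=O(\ell^{1-a})$, which is true in the defective-renewal regime $\Vert h\Vert_1<1$ (e.g.\ via the union bound $\sum_{k>n}(h^{*m})_k\le\rho^{m-1}m\sup_{j>n/m}\sum_{k>j}h_k$ summed over $m$) but does need an argument, since the naive weighted Banach-algebra route fails because $\sum_k k^{a-1}h_k$ may diverge; and (ii) the conditional Lindeberg condition, which under only the second-moment hypotheses of Assumption~\ref{CLT-Assumption}(a),(c) is not automatic from boundedness of $\mathrm{Var}(X_m)$ and requires a genuine uniform-integrability argument for $\{M_m^2\}$ (exploiting that $M_m$ is, conditionally, a sum of independent centered increments). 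Neither point is a wrong turn, but both deserve to be written out; with them supplied, your proof is a valid alternative to the one in Section~\ref{Proof of CLT}.
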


Next, let us consider the counterpart of Corollary~\ref{Coll1:LLN} and Corollary~\ref{Coll2:LLN} for the law of large numbers for the special case of the discrete-time linear Hawkes process and the INAR(1) process.
Let us first consider the special case of the discrete-time linear Hawkes process, and we have the following result. 

\begin{corollary}\label{Coll1:CLT}
Assume that Assumption \ref{CLT-Assumption} $(a)(b1)(c)$. Assume $\varepsilon$ is Poisson with mean $\alpha_{0}(\neq0)$, and $\xi_{k}$ are Poisson with mean $\alpha_{k}$, $k=1,2,\ldots$ and $\Vert\alpha\Vert_{1}<1$. 
\begin{equation}\label{eq:CLT1}
\frac{\sum_{k=1}^{n}X_{k}-n\mu}{\sqrt{n}}\rightarrow N\left(0, \frac{\alpha_0}{(1-\Vert\alpha\Vert_{1})^3} \right)
\end{equation}
in distribution as $n\rightarrow \infty$.
\end{corollary}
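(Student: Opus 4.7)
The plan is to reduce Corollary~\ref{Coll1:CLT} to a direct application of Theorem~\ref{Thm:CLT}: I will verify that the Poisson hypotheses imply those of the general CLT and then simplify the variance formula using the fact that a Poisson variable has mean equal to variance.

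First I would verify the assumptions of Theorem~\ref{Thm:CLT}. Since $\mathbb{E}[\xi_k]=\alpha_k$, the hypothesis $\Vert\alpha\Vert_{1}<1$ is exactly $\Vert\mathbb{E}[\boldsymbol{\xi}]\Vert_{1}<1$ in part~(a); and because $\mathrm{Var}[\xi_k]=\alpha_k$ for a Poisson variable, one also has $\Vert\mathrm{Var}[\boldsymbol{\xi}]\Vert_{1}=\Vert\alpha\Vert_{1}<1<\infty$. Part~(c) is trivial since $\mathbb{E}[\varepsilon]=\mathrm{Var}[\varepsilon]=\alpha_0<\infty$. The decay hypothesis~(b1) on $\mathbb{E}[\xi_n]=\alpha_n$ is assumed in the statement of the corollary and supplies the tail control needed to invoke Theorem~\ref{Thm:CLT} in this Poisson setting.

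Next I would substitute the Poisson moments into the asymptotic variance appearing in Theorem~\ref{Thm:CLT}. With $\mathbb{E}[\varepsilon]=\mathrm{Var}[\varepsilon]=\alpha_0$ and $\Vert\mathbb{E}[\boldsymbol{\xi}]\Vert_{1}=\Vert\mathrm{Var}[\boldsymbol{\xi}]\Vert_{1}=\Vert\alpha\Vert_{1}$, the numerator telescopes:
\begin{equation*}
\mathbb{E}[\varepsilon]\Vert\mathrm{Var}[\boldsymbol{\xi}]\Vert_{1}+\mathrm{Var}[\varepsilon]\bigl(1-\Vert\mathbb{E}[\boldsymbol{\xi}]\Vert_{1}\bigr)=\alpha_0\Vert\alpha\Vert_{1}+\alpha_0\bigl(1-\Vert\alpha\Vert_{1}\bigr)=\alpha_0.
\end{equation*}
Dividing by $(1-\Vert\alpha\Vert_{1})^3$ produces the variance $\alpha_0/(1-\Vert\alpha\Vert_{1})^3$ claimed in~(\ref{eq:CLT1}), and the centering $n\mu$ becomes $n\alpha_0/(1-\Vert\alpha\Vert_{1})$ by the same Poisson substitution in~(\ref{eq:lln}).

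There is no substantive obstacle here: all analytic work has already been carried out in the proof of Theorem~\ref{Thm:CLT}, and what remains is the Poisson bookkeeping $\mathbb{E}=\mathrm{Var}$, which causes the two numerator terms to collapse into a single $\alpha_0$. The only minor subtlety worth flagging is the mild discrepancy between assumption~(b1) invoked in the corollary and assumption~(b2) used in Theorem~\ref{Thm:CLT}; this is harmless in the present setting because the identity $\mathrm{Var}[\xi_k]=\mathbb{E}[\xi_k]$ makes $\Vert\mathrm{Var}[\boldsymbol{\xi}]\Vert_{1}<\infty$ automatic from summability, so no decay beyond~(b1) is needed to run the proof of Theorem~\ref{Thm:CLT} for Poisson thinning.
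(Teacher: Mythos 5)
Your proposal is correct and takes essentially the same approach as the paper: both simply substitute the Poisson identities $\mathbb{E}[\varepsilon]=\mathrm{Var}[\varepsilon]=\alpha_0$ and $\Vert\mathbb{E}[\boldsymbol{\xi}]\Vert_{1}=\Vert\mathrm{Var}[\boldsymbol{\xi}]\Vert_{1}=\Vert\alpha\Vert_{1}$ into Theorem~\ref{Thm:CLT} and note that the numerator collapses to $\alpha_0$. One caveat on your closing remark: in the proof of Theorem~\ref{Thm:CLT}, assumption~(b2) is used to make the centering error $\sqrt{n}\sum_{i\geq n}\mathbb{E}[\xi_i]$ tend to zero (under~(b1) alone this term is only $O(1)$), not to guarantee $\Vert\mathrm{Var}[\boldsymbol{\xi}]\Vert_{1}<\infty$, so the Poisson identity does not by itself bridge the gap between~(b1) and~(b2) --- though this discrepancy originates in the paper's own statement of the corollary rather than in your reduction.
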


\begin{remark} Our Corollary~\ref{Coll1:CLT} recovers the central limit theorem for the discrete-time Hawkes process[Theorem 2.3] that is studied in \cite{Wang1}. \end{remark}

In the second special case, we consider INAR(1) model
where we assume $\xi_{k}\equiv 0$ for every $k\geq 2$.

\begin{corollary}\label{Coll2:CLT}
Assume that Assumption \ref{CLT-Assumption} $(a)(b1)(c)$. Assume the INAR(1) model where $\xi_{k}\equiv 0$ for every $k\geq 2$ and $\mathbb E[\xi_1]<1$.
\begin{equation}\label{eq:CLT2}
\frac{\sum_{k=1}^{n}X_{k}-n\mu}{\sqrt{n}}\rightarrow N\left(0, \frac{\mathbb{E}[\varepsilon]{\rm Var}[\xi_1]+{\rm Var}[\varepsilon](1-\mathbb{E}[\xi_1])}
{(1-\mathbb{E}[\xi_1])^3} \right)
\end{equation}
in distribution as $n\rightarrow \infty$.
\end{corollary}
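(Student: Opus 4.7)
The plan is to derive Corollary~\ref{Coll2:CLT} directly from Theorem~\ref{Thm:CLT} by specializing to the INAR(1) case $\xi_k\equiv 0$ for all $k\ge 2$. Since only $\xi_1$ is nonzero, the relevant $\ell_1$ norms collapse to single terms:
\[ \Vert\mathbb{E}[\boldsymbol{\xi}]\Vert_1=\mathbb{E}[\xi_1]<1, \qquad \Vert{\rm Var}[\boldsymbol{\xi}]\Vert_1={\rm Var}[\xi_1]<\infty, \]
so Assumption~\ref{CLT-Assumption}(a) holds. Condition (b2) is automatic: for any $a>3/2$ one has $\sup_{n\ge 1}n^a\mathbb{E}[\xi_n]=\mathbb{E}[\xi_1]<\infty$ because the tail vanishes identically (the stated hypothesis (b1) plays the same role here, since both conditions reduce to the trivial statement $\mathbb{E}[\xi_1]<\infty$ when only the first coordinate is nonzero). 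Hypothesis (c) is directly assumed.

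With the hypotheses in force, I would invoke Theorem~\ref{Thm:CLT} to obtain
\[ \frac{\sum_{k=1}^{n}X_{k}-n\mu}{\sqrt{n}}\ \Rightarrow\ N\!\left(0,\ \frac{\mathbb{E}[\varepsilon]\Vert{\rm Var}[\boldsymbol{\xi}]\Vert_1+{\rm Var}[\varepsilon]\bigl(1-\Vert\mathbb{E}[\boldsymbol{\xi}]\Vert_1\bigr)}{(1-\Vert\mathbb{E}[\boldsymbol{\xi}]\Vert_1)^3}\right). \]
Substituting $\Vert\mathbb{E}[\boldsymbol{\xi}]\Vert_1=\mathbb{E}[\xi_1]$ and $\Vert{\rm Var}[\boldsymbol{\xi}]\Vert_1={\rm Var}[\xi_1]$ reduces the variance to $\bigl(\mathbb{E}[\varepsilon]{\rm Var}[\xi_1]+{\rm Var}[\varepsilon](1-\mathbb{E}[\xi_1])\bigr)\big/(1-\mathbb{E}[\xi_1])^3$, matching the asserted limit. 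The centering constant $\mu$ likewise specializes via \eqref{eq:lln} to $\mathbb{E}[\varepsilon]/(1-\mathbb{E}[\xi_1])$, consistent with Corollary~\ref{Coll2:LLN}.

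Because Corollary~\ref{Coll2:CLT} is a pure specialization of an earlier theorem, no genuine obstacle arises; the only thing to verify is that the tail decay hypothesis on $\mathbb{E}[\xi_n]$ required by Theorem~\ref{Thm:CLT} is automatic for a finitely supported reproduction sequence, which is immediate.
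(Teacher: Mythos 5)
Your proposal is correct and follows essentially the same route as the paper: compute $\Vert\mathbb{E}[\boldsymbol{\xi}]\Vert_1=\mathbb{E}[\xi_1]$ and $\Vert{\rm Var}[\boldsymbol{\xi}]\Vert_1={\rm Var}[\xi_1]$, then substitute into the variance formula of Theorem~\ref{Thm:CLT}. Your extra observation that the tail condition (b2) required by Theorem~\ref{Thm:CLT} is automatic when only $\xi_1$ is nonzero is a sensible clarification (the paper's stated hypothesis is (b1)), but it does not change the argument.
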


\begin{remark}
It is easy to see that our Corollary~\ref{Coll2:CLT} recovers the central limit theorem[Theorem 3] in \cite{Pakes1971}. \end{remark}

%%%%%%%%%%%%%%%%%%%%%%%%%%%%%%%%%%
\section{Proof of Main Results}\label{Proof of Main Results}

\subsection{Proof of large deviations}

\begin{proof}[Proof of Theorem~\ref{thm:LDP}]
For any $\theta\in\mathbb{R}$, we can compute that
\begin{align*}
&\mathbb{E}\left[e^{\theta\sum_{k=1}^{n}X_{k}}\right]
=\mathbb{E}\left[e^{\theta X_{n}+\theta\sum_{k=1}^{n-1}X_{k}}\right]
=\mathbb{E}\left[e^{\theta\varepsilon_{n}+\theta\sum_{k=1}^{n-1}\sum_{\ell=1}^{X_{n-k}}\xi_{\ell}^{(n,k)}+\theta\sum_{k=1}^{n-1}X_{n-k}}\right]
\\
=&\mathbb{E}[e^{\theta\varepsilon}]
\mathbb{E}\left[e^{\sum_{k=1}^{n-1}(\theta+\log\mathbb{E}[e^{\theta\xi_{k}}])X_{n-k}}\right]
=\mathbb{E}[e^{\theta\varepsilon}]
\mathbb{E}\left[e^{(\theta+\log\mathbb{E}[e^{\theta\xi_{1}}])X_{n-1}
+\sum_{k=2}^{n-1}(\theta+\log\mathbb{E}[e^{\theta\xi_{k}}])X_{n-k}}\right]\\
=&\mathbb{E}[e^{\theta\varepsilon}]
\mathbb{E}\left[e^{(\theta+\log\mathbb{E}[e^{\theta\xi_{1}}])\varepsilon_{n-1}
+(\theta+\log\mathbb{E}[e^{\theta\xi_{1}}])\sum_{k=1}^{n-2}\sum_{l=1}^{X_{n-1-k}}\xi_{l}^{(n-1,k)}
+\sum_{k=2}^{n-1}(\theta+\log\mathbb{E}[e^{\theta\xi_{k}}])X_{n-k}}\right]
\\
=&\mathbb{E}[e^{\theta\varepsilon}]
\mathbb{E}\left[e^{(\theta+\log\mathbb{E}[e^{\theta\xi_{1}}])\varepsilon}\right]
\mathbb{E}\left[e^{\sum_{k=1}^{n-2}\log\mathbb{E}[e^{(\theta+\log\mathbb{E}[e^{\theta\xi_{1}}])\xi_{k}}]X_{n-1-k}
+\sum_{k=2}^{n-1}(\theta+\log\mathbb{E}[e^{\theta\xi_{k}}])X_{n-k}}\right]
\\
=&\mathbb{E}[e^{\theta\varepsilon}]
\mathbb{E}\left[e^{(\theta+\log\mathbb{E}[e^{\theta\xi_{1}}])\varepsilon}\right]
\mathbb{E}\left[e^{\sum_{k=2}^{n-1}(\theta+\log\mathbb{E}[e^{\theta\xi_{k}}]+\log\mathbb{E}[e^{(\theta+
\log\mathbb{E}[e^{\theta\xi_{1}}])\xi_{k-1}}])X_{n-k}}\right].
\end{align*}
By iterations, we get for $\theta\in D_{\Lambda}:=(-\infty,\theta_c]$, where $\theta_c$ is defined in equation (\ref{maxF}),
\begin{equation}\label{moment generation}
\mathbb{E}\left[e^{\theta\sum_{k=1}^{n}X_{k}}\right]
=\prod_{k=1}^{n}\mathbb{E}\left[e^{f_{k}(\theta)\varepsilon}\right],
\end{equation}
where $f_{1}(\theta)=\theta$, and for any $k=2,3,\ldots,n$,
\begin{equation}\label{function f}
f_{k}(\theta)=\theta+\log\mathbb{E}[e^{f_{1}(\theta)\xi_{k-1}}]+\log\mathbb{E}[e^{f_{2}(\theta)\xi_{k-2}}]
+\cdots+\log\mathbb{E}[e^{f_{k-1}(\theta)\xi_{1}}].
\end{equation}
Hence, we have
\begin{equation}\label{Gamma}
\Gamma(\theta):=\lim_{n\rightarrow\infty}\frac{1}{n}\log\mathbb{E}\left[e^{\theta\sum_{k=1}^{n}X_{k}}\right]
=\log\mathbb{E}[e^{f_{\infty}(\theta)\varepsilon}],
\end{equation}
where $f_{\infty}(\theta)$ satisfies the equation:
\begin{equation}\label{finfty}
f_{\infty}(\theta)=\theta+\sum_{k=1}^{\infty}\log\mathbb{E}\left[e^{f_{\infty}(\theta)\xi_{k}}\right],
\end{equation}
provided that the equation $x=\theta+\sum_{k=1}^{\infty}\log\mathbb{E}[e^{x\xi_{k}}]$
has a solution and $\Gamma(\theta)=\infty$ otherwise.
Indeed, let us define the function
\begin{equation}\label{F}
F(x)=x-\sum_{k=1}^{\infty}\log\mathbb{E}[e^{x\xi_{k}}].
\end{equation}
It is easy to check that $F(x)$ is concave in $x$.
Then $f_{\infty}(\theta)$ is the smaller solution
to the equation $F(x)=\theta$
for any $\theta\leq\theta_{c}$, where
\begin{equation}\label{maxF}
\theta_{c}=\max_{x\in\mathbb{R}}F(x),
\end{equation}
and $f_{\infty}(\theta)$ is the limit of $f_k(\theta)$ in equation (\ref{function f}) as $k\rightarrow \infty$.

Next, let us show that the essential smoothness condition holds. That is $\theta_{c}>0$,
$\Gamma(\theta)$ is differentiable for any $\theta<\theta_{c}$ and $\Gamma'(\theta)\rightarrow\infty$
as $\theta\uparrow\theta_{c}$.
First, let us show that $\theta_{c}>0$.
Note that by Eq. \eqref{F}, $F(0)=0$ and $F'(0)=1-\sum_{k=1}^{\infty}\mathbb{E}[\xi_{k}]=1-\Vert\mathbb{E}[\boldsymbol{\xi}]\Vert_{1}>0$,
and thus $\theta_{c}=\max_{x\in\mathbb{R}}F(x)>0$.
Next, let us show that $\Gamma'(\theta)$ goes to $\infty$ as $\theta\uparrow\theta_{c}$.
From Eq. (\ref{finfty}) we can compute that $f'_{\infty}(\theta)=\frac{1}{1-\sum_{k=1}^{\infty}\frac{\mathbb{E}\left[e^{f_{\infty}(\theta)\xi_{k}}\xi_{k}\right]}
{\mathbb{E}\left[e^{f_{\infty}(\theta)\xi_{k}}\right]}}=\frac{1}{F'(f_{\infty}(\theta))}.$
From (\ref{F}) we know that for any $\theta\in\mathbb{R}$, $F(f_{\infty}(\theta))=f_{\infty}(\theta)-\sum_{k=1}^{\infty}\log\mathbb{E}[e^{f_{\infty}(\theta)\xi_{k}}]=\theta.$
Therefore, when $\theta=\theta_c$, by (\ref{maxF}), we have $F(f_{\infty}(\theta_c))=\theta_c=\max_{x\in\mathbb{R}}F(x)$. 
Since $F$ is concave, $F(f_{\infty}(\theta))$  tends to the maximum of $F$ as $\theta \uparrow \theta_c$, i.e.
$f_{\infty}(\theta)$ tends to the argmax of $F$ as $\theta \uparrow \theta_c$. We infer from  Eq. (\ref{F}) and (\ref{maxF}) that  $F'(f_{\infty}(\theta))$ goes to $0$ as $\theta\uparrow\theta_{c}$,
and hence $f'_{\infty}(\theta)$ goes to $\infty$ as $\theta\uparrow\theta_{c}$.
Finally, by differentiating Eq. (\ref{Gamma}) with respect to $\theta$, we conclude that
$$\Gamma'(\theta)=\frac{f'_{\infty}(\theta)\mathbb{E}\left[e^{f_{\infty}(\theta)\varepsilon}\varepsilon\right]}{\mathbb{E}
\left[e^{f_{\infty}(\theta)\varepsilon}\right]}\rightarrow \infty \quad {\rm as} \quad \theta\uparrow\theta_{c}.$$
Hence, the essential smoothness condition is satisfied. By G\"{a}rtner-Ellis theorem (see e.g. \cite{Dembo}),
$\mathbf{P}(\frac{1}{n}\sum_{k=1}^{n}X_{k}\in\cdot)$
satisfies a large deviation principle with the rate function
\begin{equation*}
I(x)=\sup_{\theta\leq\theta_{c}}\left\{\theta x-\log\mathbb{E}[e^{f_{\infty}(\theta)\varepsilon}]\right\}.
\end{equation*}
This completes the proof.
\end{proof}

\begin{remark} 
Although $F''\leq 0$, $F$ does not need to be strictly concave. When $\xi_k$ is a constant, there is only one solution of the equation $F(x)=\theta$ and the smaller solution is defined as the unique solution. When $\xi_k$ is a constant, this special case corresponds to the classical Cramer Theorem provide a reference for Cramer theorem. When $\varepsilon$ is identically zero, the dynamics is definite. We do not consider this trivial case.
\end{remark}

\begin{proof}[Proof of Collorary~\ref{Coll1:LDP}]
When $\varepsilon$ is Poisson with mean $\alpha_{0}$
and $\xi_{k}$ are Poisson with mean $\alpha_{k}$, $k=1,2,\ldots$,
we have
$\Gamma(\theta)=\log\mathbb{E}[e^{f_{\infty}(\theta)\varepsilon}]=\alpha_{0}(e^{f_{\infty}(\theta)}-1)$,
where $f_{\infty}(\theta)$ is the smaller solution
to the equation $F(x)=\theta$
for any $\theta\leq\theta_{c}:=\max_{x\in\mathbb{R}}F(x)=\Vert\alpha\Vert_{1}-\log\Vert\alpha\Vert_{1}-1$,
where
\begin{equation*}
F(x)=x-\sum_{k=1}^{\infty}\log\mathbb{E}[e^{x\xi_{k}}]
=x-\Vert\alpha\Vert_{1}(e^{x}-1),
\end{equation*}
and by applying Theorem~\ref{thm:LDP} we complete the proof.
\end{proof}

\begin{proof}[Proof of Collorary~\ref{Coll2:LDP}]
Consider INAR(1) model
where we assume $\xi_{k}\equiv 0$ for every $k\geq 2$.
In this case,
\begin{equation*}
I(x)=\sup_{\theta\leq\theta_{c}}\left\{\theta x-\log\mathbb{E}[e^{f_{\infty}(\theta)\varepsilon}]\right\},
\end{equation*}
where $f_{\infty}(\theta)$ satisfies the equation $f_{\infty}(\theta)=\theta+\log\mathbb{E}\left[e^{f_{\infty}(\theta)\xi_{1}}\right]$ and $\theta_{c}=\max_{x}\left(x-\log\mathbb{E}[e^{x\xi_{1}}]\right)$. By letting $\psi=f_{\infty}(\theta)$, we have
\begin{equation*}
I(x)=\sup_{\psi\in\mathbb{R}}\left\{x\left(\psi-\log\mathbb{E}[e^{\psi\xi_{1}}]\right)-\log\mathbb{E}[e^{\psi\varepsilon}]\right\}.
\end{equation*}
\end{proof}

%%%%%%%%%%%%%%%%%%%%%%%%%%%%%%%%%%
\subsection{Proof of moderate deviations}\label{Proof of MD}

\begin{proof}[Proof of Theorem~\ref{Thm:MDP}]
First, for any $\theta\in\mathbb{R}$, let us prove that
$$\lim_{n\to\infty}\dfrac{n}{c^2(n)}\log{\mathbb{E}\left[e^{\frac{c(n)}{n}\theta \left(\sum_{k=1}^{n}X_{k}-n\mu\right)}\right]}
=\frac{\theta^2(\mathbb{E}[\varepsilon]\Vert{\rm Var}[\boldsymbol{\xi}]\Vert_{1}+{\rm Var}[\varepsilon](1-\Vert\mathbb{E}[\boldsymbol{\xi}]\Vert_{1}))}
{2(1-\Vert\mathbb{E}[\boldsymbol{\xi}]\Vert_{1})^3},$$
where $\mu$ is defined by equation~\eqref{eq:lln}. From equation (\ref{moment generation}) in Theorem \ref{thm:LDP} , we get
\begin{equation}\label{moment generation1}
\mathbb{E}\left[e^{\theta\sum_{k=1}^{n}X_{k}}\right]
=\prod_{k=1}^{n}\mathbb{E}\left[e^{f_{k}(\theta)\varepsilon}\right],
\end{equation}
where $f_{1}(\theta)=\theta$, and for any $k=2,3,\ldots,n$,
\begin{equation*}
f_{k}(\theta)=\theta+\log\mathbb{E}[e^{f_{1}(\theta)\xi_{k-1}}]+\log\mathbb{E}[e^{f_{2}(\theta)\xi_{k-2}}]
+\cdots+\log\mathbb{E}[e^{f_{k-1}(\theta)\xi_{1}}].
\end{equation*}
Let us define $G_n(k)=f_k(\theta_n)$ where $\theta_n=\frac{c(n)}{n}\theta$ so that
\begin{equation}
\begin{aligned}\label{G_n}
G_n(k)&=\theta_n+\log\mathbb{E}[e^{G_n(1)\xi_{k-1}}]+\log\mathbb{E}[e^{G_n(2)\xi_{k-2}}]
+\cdots+\log\mathbb{E}[e^{G_n(k-1)\xi_{1}}]\\
      &=\theta_n+\sum_{i=1}^{k-1}\log\mathbb{E}[e^{G_n(k-i)\xi_{i}}].
\end{aligned}
\end{equation}
Then from (\ref{moment generation1}) we have
\begin{align}\label{eq:char Nt}
    \mathbb{E}\left[e^{\frac{c(n)}{n}\theta\sum_{k=1}^{n}X_{k}}\right]
    &=\prod_{i=1}^n\mathbb{E}\left[e^{\varepsilon{G_n(k)}}\right].
\end{align}
We write $G_n(k)$ instead of $G(k)$ to emphasize its dependence on $n$ because of the term $c(n)/n$.
As the proof of Theorem~\ref{thm:LDP} shows and $\frac{c(n)}{n}$ is sufficiently small so that we have $\frac{c(n)}{n}\theta\leq \theta_c $
where $\theta_c=\max_{x\in\mathbb{R}}F(x)$ and $F(x)=x-\sum_{k=1}^{\infty}\log\mathbb{E}[e^{x\xi_{k}}]$ and as $k\to \infty$, we get that
$G_n(\infty)=f_{\infty}(\theta_n)$ is the smaller solution to the equation 
\begin{equation}\label{thetan}
x_n=\theta_n+\sum^{\infty}_{i=1}\log\mathbb{E}[e^{x_n\xi_{i}}].	
\end{equation}
Recall equation (\ref{finfty}) in the proof of Thm \ref{thm:LDP}, takes $\theta=\theta_n$ we can get equation (\ref{thetan}).
By the assumption $\Vert\mathbb{E}[\boldsymbol{\xi}]\Vert_{1}<1$, it is easy to see that $x_n=O(c(n)/n)$ as $n\rightarrow\infty$. Because $x_n=O(c(n)/n)$ as $n\rightarrow\infty$,
we have $G_n(k)=O(c(n)/n)$ uniformly in $k$ as $n\rightarrow\infty$. By Taylor's expansion, we know that
\begin{align*}
&\log\mathbb{E}\left[e^{G_n(1)\xi_{k-1}}\right]=\log\mathbb{E}\left[1+{G_n(1)\xi_{k-1}}+\frac{G_n^2(1)\xi_{k-1}^2}{2!}\right]+O\left(\left(\dfrac{c(n)}{n}\right)^3\right)\\
=&{G_n(1)}\mathbb{E}[\xi_{k-1}]+\frac{1}{2}{G_n^2(1)}\mathbb{E}[\xi_{k-1}^2]-
\frac{1}{2}{G_n^2(1)}(\mathbb{E}[\xi_{k-1}])^2+O\left(\left(\dfrac{c(n)}{n}\right)^3\right).
\end{align*}
The last equation depends on the fact that $G_n(k)=O(c(n)/n)$ uniformly in $k$ as $n\rightarrow\infty$. 
Then from (\ref{G_n}) we have:
\begin{equation}
\begin{aligned}\label{G_n1}
G_n(k)&=\theta_n+(G_n(1)\mathbb{E}[\xi_{k-1}]+\cdots+G_n(k-1)\mathbb{E}[\xi_{1}])+\frac{1}{2}(G_n^2(1)\mathbb{E}[\xi_{k-1}^2]+\cdots+G_n^2(k-1)\mathbb{E}[\xi_{1}^2])\\
&-\frac{1}{2}(G_n^2(1)(\mathbb{E}[\xi_{k-1}])^2+\cdots+G_n^2(k-1)(\mathbb{E}[\xi_{1}])^2)+O\left(\left(\dfrac{c(n)}{n}\right)^3\right)\\
&=\theta_n+\sum\limits_{i=1}^{k-1}\mathbb{E}[\xi_{i}]G_n(k-i)+\frac{1}{2}\sum\limits_{i=1}^{k-1}{\rm Var}
[\xi_{i}]G_n^2(k-i)+O\left(\left(\dfrac{c(n)}{n}\right)^3\right).
\end{aligned}
\end{equation}
Then from (\ref{eq:char Nt}) for any $\theta\in\mathbb{R}$,
\begin{equation}\label{eq:char Nt1}
\begin{aligned}
&\frac{n}{c^2(n)}\log{\mathbb{E}\left[e^{\frac{c(n)}{n}\theta \left(\sum\limits_{k=1}^{n}X_{k}-n\mu\right)}\right]}=\frac{n}{c^2(n)}\sum\limits_{k=1}^{n}\log{\mathbb{E}\left[e^{\varepsilon G_n(k)}\right]}-\frac{n\mu\theta}{c(n)}\\
=&\frac{n}{c^2(n)}\sum\limits_{k=1}^{n}\left(G_n(k)\mathbb{E}[\varepsilon]+\frac{1}{2}G_n^2(k){\rm Var}
[\varepsilon]\right)-\frac{n\mu\theta}{c(n)}+O\left(\left(\frac{c(n)}{n}\right)^2\right).
\end{aligned}
\end{equation}

Next, we want to do the second order asymptotic expansion of the above moment generating function, and prove the convergence of the first-order and the second-order terms, 
whereas the higher order terms can be ignored. Let us write
\begin{equation}\label{G_n2}
G_n(k)=\left(\frac{c(n)}{n}\theta\right)\overline{G}_1(k)+
\left(\frac{c(n)}{n}\theta\right)^2\overline{G}_2(k)+\varepsilon_{n}(k),
\end{equation}
where $\overline{G}_1(k)$ and $\overline{G}_2(k)$ satisfy the following equations:
\begin{equation}\label{G_1(k)1}
\overline{G}_1(k)=1+\sum\limits_{i=1}^{k-1}\mathbb{E}[\xi_{i}]\overline{G}_1(k-i)\qquad \rm with \qquad  \overline{G}_1(1)=1,
\end{equation}
\begin{equation} \label{G_2(k)1}
\overline{G}_2(k)=\sum\limits_{i=1}^{k-1}\mathbb{E}[\xi_{i}]\overline{G}_2(k-i)
+\frac{1}{2}\sum\limits_{i=1}^{k-1}{\rm Var}[\xi_{i}]\overline{G}_1^2(k-i)\qquad \rm with  \qquad  \overline{G}_2(1)=0,
\end{equation}
and note that $\varepsilon_{n}(k)$ is the remainder term of the second order asymptotic expansion. Here the equations (\ref{G_1(k)1}) and (\ref{G_2(k)1}) are obtained by substituting (\ref{G_n2}) into (\ref{G_n1}), we get that
\begin{align*}
G_n(k)=&\frac{c(n)}{n}\theta+\sum\limits_{i=1}^{k-1}{\mathbb{E}[\xi_{i}]\left(\frac{c(n)}{n}\theta{\overline{G}_1(k-i)}
+\left(\frac{c(n)}{n}\theta\right)^2{\overline{G}_2(k-i)}+\varepsilon_{n}(k-i)\right)}
\\
&+\frac{1}{2}\sum\limits_{i=1}^{k-1}{{\rm Var}[\xi_{i}]\left(\frac{c(n)}{n}\theta{\overline{G}_1(k-i)}
+\left(\frac{c(n)}{n}\theta\right)^2{\overline{G}_2(k-i)}+\varepsilon_{n}(k-i)\right)^2}\\
=&\frac{c(n)}{n}\theta\left(1+\sum\limits_{i=1}^{k-1}{\mathbb{E}[\xi_{i}]{\overline{G}_1(k-i)}}\right)
\\
&+\left(\frac{c(n)}{n}\theta\right)^2\left(\sum\limits_{i=1}^{k-1}{\mathbb{E}[\xi_{i}]{\overline{G}_2(k-i)}}
+\frac{1}{2}\sum\limits_{i=1}^{k-1}{{\rm Var}[\xi_{i}]{\overline{G}_1^2(k-i)}}\right)+O\left(\sum\limits_{i=1}^{k-1}\varepsilon_{n}(k-i)\right),
\end{align*}
then we get the equations of $\overline{G}_1(k)$ and $\overline{G}_2(k)$ and also from the fact that $G_n(k)=O(c(n)/n)$ uniformly in $k$ as $n\rightarrow\infty$ we know that $\varepsilon_{n}(k)=O\left(\left(\dfrac{c(n)}{n}\right)^3\right)$ uniformly in $k$ as $n\rightarrow\infty$. From (\ref{eq:char Nt1}) and (\ref{G_n2}), we obtain
\begin{equation}\label{MGF1}
\begin{aligned}
&\frac{n}{c^2(n)}\log\mathbb{E}\left[e^{\frac{c(n)}{n}\theta\left(\sum\limits_{k=1}^{n}{X_{k}-n\mu}\right)}\right]\\
=&\frac{n}{c^2(n)}\sum\limits_{k=1}^{n}\bigg[\mathbb{E}[\varepsilon]\left(\frac{c(n)}{n}\theta{\overline{G}_1(k)}
+\left(\frac{c(n)}{n}\theta\right)^2{\overline{G}_2(k)}\right)\\
&+\frac{{\rm Var}[\varepsilon]}{2}\left(\frac{c(n)}{n}
\theta{\overline{G}_1(k)}+\left(\frac{c(n)}{n}\theta\right)^2{\overline{G}_2(k)}\right)^2\bigg]
-\frac{n\mu\theta}{c(n)}+O\left(\frac{c(n)}{n^2}\right)
\\=&\frac{\mathbb{E}[\varepsilon]\theta}{c(n)}\sum\limits_{k=1}^{n}{\overline{G}_1(k)}-\frac{n\mu\theta}{c(n)}
+\frac{{\theta}^2}{n}\left(\mathbb{E}[\varepsilon]\sum\limits_{k=1}^{n}{\overline{G}_2(k)}+
\frac{{\rm Var}[\varepsilon]}{2}\sum\limits_{k=1}^{n}\overline{G}_1^2(k)\right)+O\left(\frac{c(n)}{n^2}\right).
\end{aligned}
\end{equation}
Next, let us compute $\sum\limits_{k=1}^{n}{\overline{G}_1(k)}$. By (\ref{G_1(k)1}),
\begin{align*}
\sum\limits_{k=1}^{n}{\overline{G}_1(k)}&=\sum\limits_{k=2}^{n}{\overline{G}_1(k)}+1=1+\sum\limits_{k=2}^{n}\left({1+\sum\limits_{i=1}^{k-1}{\mathbb{E}[\xi_{i}]\overline{G}_1(k-i)}}\right)\\
&=n+\sum\limits_{k=2}^{n}\sum\limits_{i=1}^{k-1}{{\mathbb{E}[\xi_{i}]\overline{G}_1(k-i)}}=n+\sum\limits_{i=1}^{n-1}\sum\limits_{k=i+1}^{n}{{\mathbb{E}[\xi_{i}]\overline{G}_1(k-i)}},
\end{align*}
let $j=k-i$, then $1\leqslant{j}\leqslant{n-i}$, hence
\begin{align*}
\sum\limits_{k=1}^{n}{\overline{G}_1(k)}&=n+\sum\limits_{i=1}^{n-1}\sum\limits_{j=1}^{n-i}
{\mathbb{E}[\xi_{i}]\overline{G}_1(j)}=n+\sum\limits_{i=1}^{n-1}{\mathbb{E}[\xi_{i}]\left(\sum\limits_{j=1}^{n}\overline{G}_1(j)-
\sum\limits_{j=n-i+1}^{n}\overline{G}_1(j)\right)},
\end{align*}
\begin{equation*}
\sum\limits_{k=1}^{n}\overline{G}_1(k)-\sum\limits_{i=1}^{n-1}\mathbb{E}[\xi_{i}]\sum\limits_{j=1}^{n}
\overline{G}_1(j)=n-\sum\limits_{i=1}^{n-1}\mathbb{E}[\xi_{i}]\sum\limits_{j=n-i+1}^{n}\overline{G}_1(j).
\end{equation*}
Hence
\begin{equation*}
\sum\limits_{k=1}^{n}\overline{G}_1(k)=\frac{n-\sum\limits_{i=1}^{n-1}\mathbb{E}[\xi_{i}]
\sum\limits_{j=n-i+1}^{n}\overline{G}_1(j)}{1-\sum\limits_{i=1}^{n-1}\mathbb{E}[\xi_{i}]}.
\end{equation*}
Then
\begin{equation}\label{G_3(k)1}
\begin{aligned}
&\frac{\mathbb{E}[\varepsilon]\theta}{c(n)}\sum\limits_{k=1}^{n}{\overline{G}_1(k)}-\frac{n\mu\theta}{c(n)}=\frac{\mathbb{E}[\varepsilon]\theta}{c(n)}\left(\frac{n-\sum\limits_{i=1}^{n-1}{\mathbb{E}[\xi_{i}]}
\sum\limits_{j=n-i+1}^{n}{\overline{G}_1(j)}}{1-\sum\limits_{i=1}^{n-1}{\mathbb{E}[\xi_{i}]}}-\frac{n}
{1-\Vert\mathbb{E}[\boldsymbol{\xi}]\Vert_{1}}\right)\\
=&\frac{\mathbb{E}[\varepsilon]\theta}{c(n)}\left(\frac{n}{1-\sum\limits_{i=1}^{n-1}{\mathbb{E}[\xi_{i}]}}-
\frac{n}{1-\Vert\mathbb{E}[\boldsymbol{\xi}]\Vert_{1}}-\frac{\sum\limits_{i=1}^{n-1}{\mathbb{E}[\xi_{i}]}
\sum\limits_{j=n-i+1}^{n}{\overline{G}_1(j)}}{1-\sum\limits_{i=1}^{n}{\mathbb{E}[\xi_{i}]}}\right).
\end{aligned}
\end{equation}
For the first term in equation (\ref{G_3(k)1}), we can compute
\begin{align*}
&\left\vert\frac{\mathbb{E}[\varepsilon]\theta}{c(n)}\left(\frac{n}{1-\sum\limits_{i=1}^{n-1}{\mathbb{E}[\xi_{i}]}}-
\frac{n}{1-\Vert\mathbb{E}[\boldsymbol{\xi}]\Vert_{1}}\right)\right\vert
=\left\vert\frac{\mathbb{E}[\varepsilon]n\theta}{c(n)}\left(\frac{\sum\limits_{i=1}^{n-1}{\mathbb{E}[\xi_{i}]}
-\Vert\mathbb{E}[\boldsymbol{\xi}]\Vert_{1}}{\left(1-\sum\limits_{i=1}^{n-1}{\mathbb{E}[\xi_{i}]}\right)(1-\Vert\mathbb{E}[\boldsymbol{\xi}]\Vert_{1})}
\right)\right\vert\\
\leqslant &\left\vert\frac{\mathbb{E}[\varepsilon]n\theta}{c(n)}\frac{\sum\limits_{i=n}^{\infty}{\mathbb{E}[\xi_{i}]}}
{(1-\Vert\mathbb{E}[\boldsymbol{\xi}]\Vert_{1})^2}\right\vert=\frac{\mathbb{E}[\varepsilon]{n}}{c(n)}\left\vert\frac{\theta\sum\limits_{i=n}^{\infty}
{\mathbb{E}[\xi_{i}]}}{(1-\Vert\mathbb{E}[\boldsymbol{\xi}]\Vert_{1})^2}\right\vert.
\end{align*}
Under the assumption $\sup\limits_{n>0}n^\frac{3}{2}\mathbb{E}[\xi_{n}]=C_1<\infty$, we have
\begin{equation}\label{sum control}
\sum\limits_{i=n}^{\infty}{\mathbb{E}[\xi_{i}]}\leqslant\sum\limits_{i=n}^{\infty}{C_1{i}^{-\frac{3}{2}}}
<2C_1(n-1)^{-\frac{1}{2}}.
\end{equation}
Therefore, we conclude that
\begin{equation}\label{sum convergence}
\frac{\mathbb{E}[\varepsilon]\theta{n}}{c(n)}\left\vert\frac{\sum\limits_{i=n}^{\infty}
{\mathbb{E}[\xi_{i}]}}{(1-\Vert\mathbb{E}[\boldsymbol{\xi}]\Vert_{1})^2}\right\vert \leqslant\frac{\vert\mathbb{E}[\varepsilon]
\vert\vert\theta\vert{2C_1n}}{c(n)\sqrt{n-1}\vert(1-\Vert\mathbb{E}[\boldsymbol{\xi}]\Vert_{1})^2\vert}\rightarrow0,
\end{equation}
as
$n\rightarrow\infty$.
By Lemma~\ref{Lem1:MDP}, $\overline{G}_1(t)$ is uniformly bounded, then we write $\overline{G}_1(\infty)$ for a upper bound of $\overline{G}_1(t)$ over $t$. Then for the second term in equation (\ref{G_3(k)1}), we have
\begin{align*}
&\limsup_{n\rightarrow\infty}\left\vert\frac{\mathbb{E}[\varepsilon]\theta}{c(n)}\frac{\sum\limits_{i=1}^{n-1}
{\mathbb{E}[\xi_{i}]}\sum\limits_{j=n-i+1}^{n}{\overline{G}_1(j)}}{1-\sum\limits_{i=1}^{n}{\mathbb{E}[\xi_{i}]}}\right\vert
\leqslant {\overline{G}_1(\infty)}\limsup_{n\rightarrow\infty}\frac{\mathbb{E}[\varepsilon]\theta}{c(n)}
\left\vert\frac{\sum\limits_{i=1}^{n-1}{(i-1)\mathbb{E}[\xi_{i}]}}{1-\Vert\mathbb{E}[\boldsymbol{\xi}]\Vert_{1}}\right\vert
\\
\leqslant & {\overline{G}_1(\infty)}\limsup_{n\rightarrow\infty}\frac{\mathbb{E}[\varepsilon]\theta}{c(n)}
\left\vert\frac{\sum\limits_{i=1}^{n-1}{\frac{C_1}{\sqrt{i}}}}{1-\Vert\mathbb{E}[\boldsymbol{\xi}]\Vert_{1}}\right\vert
\leqslant {\overline{G}_1(\infty)}\limsup_{n\rightarrow\infty}\frac{C_1\mathbb{E}[\varepsilon]\theta}
{1-\Vert\mathbb{E}[\boldsymbol{\xi}]\Vert_{1}}\frac{2\sqrt{n}}{c(n)}.
\end{align*}
Hence, $\lim\limits_{n\rightarrow\infty}\left[\frac{\mathbb{E}[\varepsilon]\theta}{c(n)}\sum\limits_{k=1}^{n}
{\overline{G}_1(k)}-\frac{n\mu\theta}{c(n)}\right]=0.$
Furthermore, we can compute that
\begin{equation}\label{G_1bar}
\lim\limits_{n\rightarrow\infty}\frac{1}{n}\sum\limits_{k=1}^{n}{\overline{G}_1(k)}=\frac{1}{1-\Vert\mathbb{E}[\boldsymbol{\xi}]\Vert_{1}}.
\end{equation}

Next, we compute the limits of $\frac{1}{n}\sum\limits_{k=1}^{n}{\overline{G}_1^2(k)}$ and $\frac{1}{n}\sum\limits_{k=1}^{n}{\overline{G}_2(k)}$ as $n\rightarrow\infty$. According to Lemma~\ref{Lem2:MDP}, $\overline{G}_2(n)$ is uniformly bounded in $n$.
We know from (\ref{G_1(k)1}) that
\begin{align*}
\sum\limits_{k=1}^{n}{\overline{G}^2_1(k)}&=1+\sum\limits_{k=2}^{n}{\overline{G}^2_1(k)}=1+\sum\limits_{k=2}^{n}\left({1+\sum\limits_{i=1}^{k-1}{\mathbb{E}[\xi_{i}]\overline{G}_1(k-i)}}\right)^2
\\
&=n+2\sum\limits_{k=2}^{n}\sum\limits_{i=1}^{k-1}{{\mathbb{E}[\xi_{i}]\overline{G}_1(k-i)}}
+\sum\limits_{k=2}^{n}\left(\sum\limits_{i=1}^{k-1}\mathbb{E}[\xi_{i}]\overline{G}_1(k-i)\right)^2,
\end{align*}
let $j=k-i$, then $1\leqslant{j}\leqslant{n-i}$, hence as $n\rightarrow \infty$ and by equation (\ref{G_1bar})
\begin{align*}
\frac{\sum\limits_{k=2}^{n}\sum\limits_{i=1}^{k-1}{{\mathbb{E}[\xi_{i}]\overline{G}_1(k-i)}}}{n}
=\frac{\sum\limits_{i=1}^{n-1}\sum\limits_{j=1}^{n-i}
{{\mathbb{E}[\xi_{i}]\overline{G}_1(j)}}}{n}
=\sum\limits_{i=1}^{n-1}
{\mathbb{E}[\xi_{i}]\frac{\sum\limits_{j=1}^{n-i}\overline{G}_1(j)}{n}\rightarrow\frac{\Vert\mathbb{E}[\boldsymbol{\xi}]\Vert_{1}}{1-\Vert\mathbb{E}[\boldsymbol{\xi}]\Vert_{1}}},
\end{align*}
Then we have that as $n\rightarrow \infty$,
$$\frac{\sum\limits_{k=2}^{n}\left(\sum\limits_{i=1}^{k-1}\mathbb{E}[\xi_{i}]\overline{G}_1(k-i)\right)^2}{n}\rightarrow \frac{\Vert\mathbb{E}[\boldsymbol{\xi}]\Vert_{1}^2}{(1-\Vert\mathbb{E}[\boldsymbol{\xi}]\Vert_{1})^2}.$$
That means
\begin{equation}\label{sum G_1^2}
\lim_{n\rightarrow\infty}\frac{1}{n}\sum\limits_{k=1}^{n}{\overline{G}_1^2(k)}=1+\frac{2\Vert\mathbb{E}[\boldsymbol{\xi}]
\Vert_{1}}{1-\Vert\mathbb{E}[\boldsymbol{\xi}]\Vert_{1}}+\frac{\Vert\mathbb{E}[\boldsymbol{\xi}]\Vert_{1}^2}{(1-\Vert\mathbb{E}[\boldsymbol{\xi}]
\Vert_{1})^2}=\frac{1}{(1-\Vert\mathbb{E}[\boldsymbol{\xi}]\Vert_{1})^2}.
\end{equation}
For $\frac{1}{n}\sum\limits_{k=1}^{n}{\overline{G}_2(k)}$ from (\ref{G_2(k)1}) we get that
$$\frac{\sum\limits_{k=1}^{n}\overline{G}_2(k)}{n}=\frac{\sum\limits_{k=1}^{n}\sum\limits_{i=1}^{k-1}\mathbb{E}[\xi_{i}]\overline{G}_2(k-i)}{n}
+\frac{\frac{1}{2}\sum\limits_{k=1}^{n}\sum\limits_{i=1}^{k-1}{\rm Var}[\xi_{i}]\overline{G}_1^2(k-i)}{n},$$
That means
\begin{equation}\label{sum G_2}
\lim_{n\rightarrow\infty}\frac{1}{n}\sum\limits_{k=1}^{n}{\overline{G}_2(k)}=\frac{\Vert{\rm Var}[\boldsymbol{\xi}]\Vert_{1}}{2(1-\Vert\mathbb{E}[\boldsymbol{\xi}]\Vert_{1})^3}.
\end{equation}

Thus, we have
\begin{equation*}
\lim_{n\rightarrow\infty}\frac{\theta^2}{n}\left(\mathbb{E}[\varepsilon]\sum\limits_{k=1}^{n}{\overline{G}_2(k)}
+\frac{{\rm Var}[\varepsilon]}{2}\sum\limits_{k=1}^{n}\overline{G}_1^2(k)\right)=\frac{\theta^2
(\mathbb{E}[\varepsilon]\Vert{\rm Var}[\boldsymbol{\xi}]\Vert_{1}+{\rm Var}[\varepsilon](1-\Vert\mathbb{E}[\boldsymbol{\xi}]\Vert_{1}))}
{2(1-\Vert\mathbb{E}[\boldsymbol{\xi}]\Vert_{1})^3}.
\end{equation*}
Therefore, we have proved that
\begin{equation*}
\lim_{n\rightarrow\infty}\frac{n}{c^2(n)}\log\mathbb{E}\left[e^{\frac{c(n)}{n}\theta\left(\sum\limits_{k=1}^{n}
{X_{k}-n\mu}\right)}\right]=\frac{\theta^2(\mathbb{E}[\varepsilon]\Vert{\rm Var}[\boldsymbol{\xi}]\Vert_{1}+{\rm Var}[\varepsilon](1-\Vert\mathbb{E}[\boldsymbol{\xi}]\Vert_{1}))}
{2(1-\Vert\mathbb{E}[\boldsymbol{\xi}]\Vert_{1})^3}.
\end{equation*}
By applying the G\"{a}rtner-Ellis theorem (see e.g. \cite{Dembo}), we conclude that, for any Borel set $A$,
\begin{align*}
-\inf_{x\in A^{o}}J(x)
&\leq\liminf_{n\rightarrow\infty}\frac{n}{c^2(n)}\log \mathbf{P}\left(\dfrac{\sum_{k=1}^{n}X_{k}-n\mu}{c(n)}\in A\right)\\
&\leq\limsup_{n\rightarrow\infty}\frac{n}{c^2(n)}\log \mathbf{P}\left(\dfrac{\sum_{k=1}^{n}X_{k}-n\mu}{c(n)}\in A\right)\leq-\inf_{x\in\overline{A}}J(x),
\end{align*}
where
\begin{align*}
    J(x)&=\sup_{\theta\in\mathbb{R}}\left\{\theta x-\frac{\theta^2(\mathbb{E}[\varepsilon]\Vert{\rm Var}[\boldsymbol{\xi}]\Vert_{1}+{\rm Var}[\varepsilon](1-\Vert\mathbb{E}[\boldsymbol{\xi}]\Vert_{1})}
{2(1-\Vert\mathbb{E}[\boldsymbol{\xi}]\Vert)^3}\right\}\\
    &=\frac{x^2(1-\Vert\mathbb{E}[\boldsymbol{\xi}]\Vert_{1})^3}
{2(\mathbb{E}[\varepsilon]\Vert{\rm Var}[\boldsymbol{\xi}]\Vert_{1}+{\rm Var}[\varepsilon](1-\Vert\mathbb{E}[\boldsymbol{\xi}]\Vert_{1}))}.
\end{align*}
This completes the proof.
\end{proof}
\begin{remark} 
The equation (\ref{thetan}) can read as $F(x_n)=\theta_n$ as equation (\ref{F}). Since $F$ is concave, the equation may have two solutions. Define $G_n(\infty)$ as the smaller solution. By Tayor's formula for equation (\ref{thetan}), we get that
\begin{align*}
x_n&=\theta_n+\sum^{\infty}_{i=1}\log\mathbb{E}[e^{x_n\xi_{i}}]=\theta_n+\sum^{\infty}_{i=1} \log[1+\mathbb{E}[\xi_i]x_n+\mathbb{E}[\xi_i]O(x_n^2)]\\
   &=\theta_n+\sum^{\infty}_{i=1}\mathbb{E}[\xi_i]x_n+O(x_n^2).
   \end{align*}
Since $\Vert\mathbb{E}[\boldsymbol{\xi}]\Vert_{1}<1$, then we arrive at $x_n=O(\theta_n)=O(c(n)/n)$.
\end{remark}

\begin{proof}[Proof of Corollary~\ref{Coll1:MDP}]
When $\varepsilon$ is Poisson with mean $\alpha_{0}$, $\xi_{k}$ are Poisson with mean $\alpha_{k}$, $k=1,2,\ldots$,
we can compute
$\Vert\mathbb{E}[\boldsymbol{\xi}]\Vert_{1}$ and $\Vert{\rm Var}[\boldsymbol{\xi}]\Vert_{1}$, where $\Vert\mathbb{E}[\boldsymbol{\xi}]\Vert_{1}=\sum_{i=1}^{\infty}\mathbb{E}[{\xi}_{i}]=
    \sum_{i=1}^{\infty}\alpha_{i}=\Vert \alpha\Vert_{1},
    \Vert{\rm Var}[\boldsymbol{\xi}]\Vert_{1}=\sum_{i=1}^{\infty}\rm Var[{\xi}_{i}]=\sum_{i=1}^{\infty}\alpha_{i}=\Vert \alpha\Vert_{1}$. By applying Theorem \ref{Thm:MDP} we have
\begin{align*}
J(x)=\frac{x^2(1-\Vert \alpha\Vert_{1})^3}
{2\alpha_{0}\left(\Vert \alpha\Vert_{1}+(1-\Vert \alpha\Vert_{1})\right)}
=\dfrac{x^2\left(1-\Vert \alpha\Vert_{1}\right)^3}{2\alpha_0}.
\end{align*}
This completes the proof.
\end{proof}

\begin{proof}[Proof of Corollary~\ref{Coll2:MDP}]
Consider INAR(1) model where we assume $\xi_{k}\equiv 0$ for every $k\geq 2$. Then we can compute $\Vert\mathbb{E}[\boldsymbol{\xi}]\Vert_{1}=\sum_{i=1}^{\infty}\mathbb{E}[{\xi}_{i}]=
    \mathbb{E}[{\xi}_{1}], 
    \Vert\rm Var[\boldsymbol{\xi}]\Vert_{1}=\sum_{i=1}^{\infty}\rm Var[{\xi}_{i}]=\rm Var[{\xi}_{1}]$. By applying Theorem~\ref{Thm:MDP} we have
\begin{align*}
J(x)=\frac{x^2(1-\mathbb{E}[{\xi}_{1}])^3}
{2(\mathbb{E}[\varepsilon]\rm Var[{\xi}_{1}]+\rm Var[\varepsilon](1-\mathbb{E}[{\xi}_{1}]))}.
\end{align*}
This completes the proof.
\end{proof}

%%%%%%%%%%%%%%%%%%%%%%%%%%%%%%%%%%
\subsection{Proof of law of large numbers}\label{Proof of LLN}

\begin{proof}[Proof of Theorem~\ref{thm:LLN}]
Let $\mathcal{F}_n$ be the $\sigma$-algebra generated by $X_1, X_2,\ldots,X_n$. Then, it is easy to see that
\begin{equation}\label{martingale}
M_n:=\sum_{i=1}^n(X_i-\mathbb{E}[X_i|\mathcal{F}_{i-1}])
\end{equation}
is a martingale with respect to  $(\mathcal{F}_n)^{\infty}_{n=1}$ and the $\sigma$-field $\mathcal{F}_0$ is the trivial $\sigma$-field containing only the empty set $\emptyset$ and the sample space, on which the sequence $(X_i)_{i=1}^{\infty}$ is defined. On the other hand, from the definition of $(X_i)^{\infty}_{i=1}$, it is clear that

\begin{equation}\label{conditional-expection}
\mathbb{E}[X_i|\mathcal{F}_{i-1}]=\mathbb{E}\left[\left(\varepsilon_i+\sum_{k=1}^{i-1}\sum_{l=1}^{X_{i-k}}\xi_{l}^{(i,k)}\right)|\mathcal{F}_{i-1}\right]
                                 =\mathbb{E}[\varepsilon_i]+\sum_{k=1}^{i-1}\sum_{l=1}^{X_{i-k}}\mathbb{E}[\xi_{l}^{(i,k)}]
                                 =\mathbb{E}[\varepsilon]+\sum_{k=1}^{i-1}\sum_{l=1}^{X_{i-k}}\mathbb{E}[\xi_k]. 
\end{equation}
Therefore,
\begin{align}\label{martingale decomposition}
M_n&=\sum_{i=1}^n(X_i-\mathbb{E}[X_i|\mathcal{F}_{i-1}])\nonumber=\sum_{i=1}^nX_i-\sum_{i=1}^n\mathbb{E}[X_i|\mathcal{F}_{i-1}]\nonumber\\
   &=\sum_{i=1}^nX_i-n\mathbb{E}[\varepsilon]-\sum_{i=1}^n\sum_{k=1}^{i-1}\mathbb{E}[\xi_k]X_{i-k}\nonumber=\sum_{i=1}^nX_i-n\mathbb{E}[\varepsilon]-\sum_{i=1}^{n-1}\sum_{k=1}^{n-i}\mathbb{E}[\xi_k]X_i\nonumber\\
   &=\left(1-\Vert\mathbb{E}[\boldsymbol{\xi}]\Vert_{1}\right)\sum_{i=1}^nX_i-n\mathbb{E}[\varepsilon]+\epsilon_n,
\end{align}
where
\begin{equation*}
\epsilon_n:=\sum_{i=1}^{n-1}\sum_{k=n-i+1}^{\infty}\mathbb{E}[\xi_k]X_i+\Vert\mathbb{E}[\boldsymbol{\xi}]\Vert_{1}X_n \ge 0.
\end{equation*}

First we bound the second term of $\epsilon_n$. We want to show that $\frac{X_n}{n}\rightarrow 0$ in probability as $n\rightarrow \infty$. So we want to prove that $\mathbb{E}[X_i]$ is uniformly bounded, then by Markov inequality we can get the desired result. By taking the expectation in equation \eqref{main:dynamicsempty}, we get that 
\begin{align*}
\mathbb{E}[X_i]=\mathbb{E}[\varepsilon]+\sum_{k=1}^{i-1}\mathbb{E}[\xi_k]\mathbb{E}[X_{i-k}]&\leq \mathbb{E}[\varepsilon]+\max_{1\leq k\leq i-1}\mathbb{E}[X_{i-k}]\sum_{k=1}^{i-1}\mathbb{E}[\xi_k]\\
               &\leq \mathbb{E}[\varepsilon]+\Vert\mathbb{E}[\boldsymbol{\xi}]\Vert_{1}\max_{1\leq k\leq i}\mathbb{E}[X_{k}].\end{align*}
By taking the maximum over $i$ from $1$ to $\infty$ for the left hand side of the above inequality, we arrive at $\max_{1\leq k\leq i}\mathbb{E}[X_{k}]\leq \frac{ \mathbb{E}[\varepsilon]}{1-\Vert\mathbb{E}[\boldsymbol{\xi}]\Vert_{1}}$. Since it holds for every $i$, we have $\sup_{k\in\mathbb{N}}\mathbb{E}[X_{k}]\leq \frac{ \mathbb{E}[\varepsilon]}{1-\Vert\mathbb{E}[\boldsymbol{\xi}]\Vert_{1}}$, which implies that $\frac{X_n}{n}\rightarrow 0$ in probability as $n\rightarrow \infty$ by Markov inequality.

Next, we bound the first term of $\epsilon_n$. We know that $\sum_{k=n-i-1}^{\infty}\mathbb{E}[\xi_k]$ decrease to $0$ when $n\rightarrow \infty$ by the assumption $\Vert\mathbb{E}[\boldsymbol{\xi}]\Vert_{1}<1$. So we have $\frac{\sum_{i=1}^{n-1}\sum_{k=n-i-1}^{\infty}\mathbb{E}[\xi_k]\mathbb{E}[X_i]}{n}\rightarrow 0$ as $n\rightarrow \infty$. That derives that $\frac{\sum_{i=1}^{n-1}\sum_{k=n-i-1}^{\infty}\mathbb{E}[\xi_k]X_i}{n} \rightarrow 0$ in probability as $n\rightarrow \infty$.
So we obtain that $\frac{\epsilon_n}{n}\rightarrow 0$ in probability as $n\rightarrow \infty$.

From the definition of $M_n$ and independence among $\xi_{l}^{(n,k)}$ and $\xi_{l}^{(n,\tilde{k})}$ for $k \neq \tilde{k}$ also $\xi_{k}$ and $\varepsilon_{n}$, we have that
\begin{equation}\label{martingale-difference-square}
\begin{aligned}
\mathbb{E}[M^2_n]&=\sum_{i=1}^n\mathbb{E}[(X_i-\mathbb{E}[X_i|\mathcal{F}_{i-1}])^2]=\sum_{i=1}^n\mathbb{E}\left[\left(\varepsilon_{i}+\sum_{k=1}^{i-1}\sum_{l=1}^{X_{i-k}}\xi_{l}^{(i,k)}
                 -\mathbb{E}[\varepsilon]-\sum_{k=1}^{i-1}\sum_{l=1}^{X_{i-k}}\mathbb{E}[\xi_k]\right)^2\right]\\
                 &=\sum_{i=1}^n\mathbb{E}[(\varepsilon_{i}-\mathbb{E}[\varepsilon])^2]
                 +\sum_{i=1}^n\mathbb{E}\left[\left(\sum_{k=1}^{i-1}\sum_{l=1}^{X_{i-k}}\xi_{l}^{(i,k)}-\mathbb{E}[\xi_k]\right)^2\right]\\
                 &=\sum_{i=1}^n{\rm Var}[\varepsilon]+\sum_{i=1}^n \sum_{k=1}^{i-1}\mathbb{E}\left[\left(\sum_{l=1}^{X_{i-k}}\xi_{l}^{(i,k)}-\mathbb{E}[\xi_k]\right)^2\right]=\sum_{i=1}^n{\rm Var}[\varepsilon]+\sum_{i=1}^n \sum_{k=1}^{i-1}{\rm Var}[\xi_k]\mathbb{E}[X_{i-k}]\\
                 &\le n{\rm Var}[\varepsilon]+\Vert{\rm Var}[\boldsymbol{\xi}]\Vert_{1}\sum_{i=1}^n \mathbb{E}[X_i].
\end{aligned}
\end{equation}
So we divide both sides of this inequality by $n^2$ with the fact $\frac{\sum_{i=1}^n\mathbb{E}[X_i]}{n}\rightarrow \frac{\mathbb{E}[\varepsilon]}{1-\Vert\mathbb{E}[\boldsymbol{\xi}]\Vert_{1}}$, we obtain that $\mathbb{E}[M^2_n/n^2]\rightarrow 0$ as $n\rightarrow \infty$, that derives that $\frac{M_n}{n}\rightarrow 0$ in probability as $n\rightarrow \infty$. Together with $\frac{\epsilon_n}{n}\rightarrow 0$ and (\ref{martingale decomposition}), we conclude that
$$\frac{1}{n}{\sum_{k=1}^{n}X_{k}}\rightarrow\frac{\mathbb{E}[\varepsilon]}{1-\Vert\mathbb{E}[\boldsymbol{\xi}]\Vert_{1}},$$
in probability as $n\rightarrow\infty$.

By using the LDP for $\mathbf{P}\left(\frac{1}{n}{\sum_{k=1}^{n}X_{k}}\in\cdot\right)$ in Theorem \ref{thm:LDP}, the tail probabilities have exponential decay, that are summable. That is, for any $\delta>0$, there exists some $N\in\mathbb{N}$, such that
for any $n>N$, we have
\begin{align*}
\mathbf{P}\left(\left|\frac{1}{n}{\sum_{k=1}^{n}X_{k}}-\frac{\mathbb{E}[\varepsilon]}{1-\Vert\mathbb{E}[\boldsymbol{\xi}]\Vert_{1}}\right|\ge \delta\right) 
&\leq
\mathbf{P}\left(\frac{1}{n}{\sum_{k=1}^{n}X_{k}}\ge \mu+\delta\right)
+\mathbf{P}\left(\frac{1}{n}{\sum_{k=1}^{n}X_{k}}\leq\mu- \delta\right)
\\
&\leq
e^{-\frac{n}{2}I(\mu+\delta)}+e^{-\frac{n}{2}I(\mu-\delta)},
\end{align*}
where $\mu=\frac{\mathbb{E}[\varepsilon]}{1-\Vert\mathbb{E}[\boldsymbol{\xi}]\Vert_{1}}$
and $I(\cdot)$ is the rate function from Theorem \ref{thm:LDP}, 
which implies that 
  $$\sum_{n=1}^{\infty}\mathbf{P}\left(\left|\frac{1}{n}{\sum_{k=1}^{n}X_{k}}-\frac{\mathbb{E}[\varepsilon]}{1-\Vert\mathbb{E}[\boldsymbol{\xi}]\Vert_{1}}\right|\ge \delta\right) 
  \leq 
  N+\sum_{n=N+1}^{\infty}\left(e^{-\frac{n}{2}I(\mu+\delta)}+e^{-\frac{n}{2}I(\mu-\delta)}\right)< \infty.$$
Therefore, by Borel-Cantelli lemma, it follows that
%$$\mathbf{P}\left(\limsup_{n\rightarrow\infty}\left|\frac{1}{n}{\sum_{k=1}^{n}X_{k}}-\frac{\mathbb{E}[\varepsilon]}{1-\Vert\mathbb{E}[\boldsymbol{\xi}]\Vert_{1}}\right|\ge \delta\right)=0,$$ that is 
\begin{equation}
\label{eq:lln3}
\lim_{n\rightarrow\infty}\frac{1}{n}{\sum_{k=1}^{n}X_{k}}=\mu:=\frac{\mathbb{E}[\varepsilon]}{1-\Vert\mathbb{E}[\boldsymbol{\xi}]\Vert_{1}},
\end{equation}
almost surely as $n\rightarrow\infty$.
\end{proof}

\begin{proof}[Proof of Corollary~\ref{Coll1:LLN}]
When $\varepsilon$ is Poisson with mean $\alpha_{0}$, $\xi_{k}$ are Poisson with mean $\alpha_{k}$, $k=1,2,\ldots$,
we can compute
$\Vert\mathbb{E}[\boldsymbol{\xi}]\Vert_{1}$ which is given by $\Vert\mathbb{E}[\boldsymbol{\xi}]\Vert_{1}=\sum_{i=1}^{\infty}\mathbb{E}[{\xi}_{i}]=
    \sum_{i=1}^{\infty}\alpha_{i}=\Vert \alpha\Vert_{1}$. By applying Theorem \ref{thm:LLN} we have
\begin{align*}
\lim_{n\rightarrow\infty}\frac{1}{n}{\sum_{k=1}^{n}X_{k}}=\mu:=\frac{\alpha_0}{1-\Vert\alpha\Vert_{1}}.
\end{align*}
\end{proof}

\begin{proof}[Proof of Corollary~\ref{Coll2:LLN}]
Consider INAR(1) model where we assume $\xi_{k}\equiv 0$ for every $k\geq 2$. Then we can compute $\Vert\mathbb{E}[\boldsymbol{\xi}]\Vert_{1}=\sum_{i=1}^{\infty}\mathbb{E}[{\xi}_{i}]=
    \mathbb{E}[{\xi}_{1}]$. By applying Theorem~\ref{thm:LLN} we have
\begin{align*}
\lim_{n\rightarrow\infty}\frac{1}{n}{\sum_{k=1}^{n}X_{k}}=\mu:=\frac{\mathbb{E}[\varepsilon]}{1- \mathbb{E}[{\xi}_{1}]}.
\end{align*}	
\end{proof}

%%%%%%%%%%%%%%%%%%%%%%%%%%%%%%%%%%
\subsection{Proof of Central limit theorem}\label{Proof of CLT}

\begin{proof}[Proof of Theorem~\ref{Thm:CLT}]
By the moment generating function method, it suffices to show that for any $\theta\in \mathbb{R}$
\begin{equation}\label{mgf convergence}
{\mathbb{E}\left[e^{\frac{\theta}{\sqrt{n}}\left(\sum\limits_{k=1}^{n}X_{k}-n\mu\right)}\right]}\rightarrow e^{\frac{\theta^2}{2}\sigma^2},
\end{equation}
as $n\rightarrow\infty$, where $$\sigma^2=\frac{\mathbb{E}[\varepsilon]\Vert{\rm Var}[\boldsymbol{\xi}]\Vert_{1}+{\rm Var}[\varepsilon](1-\Vert\mathbb{E}[\boldsymbol{\xi}]\Vert_{1})}{(1-\Vert\mathbb{E}[\boldsymbol{\xi}]\Vert_{1})^3}.$$
The proof is similar as the proof of MDP. It follows from equation (\ref{eq:char Nt1}) that
\begin{equation}\label{MGF2}
\begin{aligned}
\mathbb{E}\left[e^{\frac{\theta}{\sqrt{n}}\left(\sum\limits_{k=1}^{n}{X_{k}-n\mu}\right)}\right]
=&\frac{\prod_{i=1}^{n}\mathbb{E}[e^{\varepsilon G_n(k)}]}{e^{\sqrt{n}\theta\mu}}\\
=&e^{\sum_{i=1}^n\left(G_n(k)\mathbb{E}[\varepsilon]+\frac{1}{2}G_n^2(k){\rm Var}
[\varepsilon]\right)-\sqrt{n}\theta\mu+O\left(\frac{1}{n}\right)},
\end{aligned}
\end{equation}
where 
\begin{equation}\label{G_n3}
G_n(k)=\frac{\theta}{\sqrt{n}}\overline{G}_1(k)+\left(\frac{\theta}{\sqrt{n}}\right)^2\overline{G}_2(k)+\varepsilon_{n}(k),
\end{equation}
and $\overline{G}_1(k), \overline{G}_2(k)$ satisfy the following equations:
\begin{equation}\label{G_1(k)2}
\overline{G}_1(k)=1+\sum\limits_{i=1}^{k-1}\mathbb{E}[\xi_{i}]\overline{G}_1(k-i)\qquad \rm with \qquad  \overline{G}_1(1)=1,
\end{equation}
\begin{equation} \label{G_2(k)2}
\overline{G}_2(k)=\sum\limits_{i=1}^{k-1}\mathbb{E}[\xi_{i}]\overline{G}_2(k-i)
+\frac{1}{2}\sum\limits_{i=1}^{k-1}{\rm Var}[\xi_{i}]\overline{G}_1^2(k-i)\qquad \rm with  \qquad  \overline{G}_2(1)=0,
\end{equation}
and note that $\varepsilon_{n}(k)$ is the remainder term of the second order asymptotic expansion. 
So combine the equation (\ref{G_1(k)2}) and (\ref{G_2(k)2}) into equation (\ref{MGF2}) we arrive at 

\begin{equation}\label{mgf convergence1}
\begin{aligned}
&\mathbb{E}\left[e^{\frac{\theta}{\sqrt{n}}\left(\sum\limits_{k=1}^{n}{X_{k}-n\mu}\right)}\right]\\
=&e^{\left[\mathbb{E}[\varepsilon]\left(\frac{\theta}{\sqrt{n}}{\overline{G}_1(k)}
+\left(\frac{\theta}{\sqrt{n}}\right)^2{\overline{G}_2(k)}\right)
+\frac{{\rm Var}[\varepsilon]}{2}\left(\frac{\theta}{\sqrt{n}}{\overline{G}_1(k)}+\left(\frac{\theta}{\sqrt{n}}\right)^2{\overline{G}_2(k)}\right)^2\right]-\sqrt{n}\mu\theta+O(\frac{1}{n})}\\
=&e^{\frac{\mathbb{E}[\varepsilon]\theta}{\sqrt{n}}\left(\sum\limits_{k=1}^{n}{\overline{G}_1(k)}-
n\mu\right)+\frac{{\theta}^2}{n}\left(\mathbb{E}[\varepsilon]\sum\limits_{k=1}^{n}{\overline{G}_2(k)}+
\frac{{\rm Var}[\varepsilon]}{2}\sum\limits_{k=1}^{n}\overline{G}_1^2(k)\right)+O(\frac{1}{n})}.
\end{aligned}
\end{equation}
Under the Assumption \ref{CLT-Assumption}  (b2) we know from the counterpart of equation (\ref{sum control}) and (\ref{sum convergence}) that

\begin{equation}\label{sum control2}
\sum\limits_{i=n}^{\infty}{\mathbb{E}[\xi_{i}]}\leqslant\sum\limits_{i=n}^{\infty}{C_2{i}^{-a}}
<\frac{C_2}{-a+1}(n-1)^{-a+1}.
\end{equation}
Therefore, we conclude that
\begin{equation}\label{sum convergence2}
\mathbb{E}[\varepsilon]\theta\sqrt{n}\left\vert\frac{\sum\limits_{i=n}^{\infty}
{\mathbb{E}[\xi_{i}]}}{(1-\Vert\mathbb{E}[\boldsymbol{\xi}]\Vert_{1})^2}\right\vert <\frac{\vert\mathbb{E}[\varepsilon]
\vert\vert\theta\vert{C_2n^{\frac{3}{2}-a}}}{(a-1)(1-\Vert\mathbb{E}[\boldsymbol{\xi}]\Vert_{1})^2}\rightarrow0,
\end{equation}
as $n\rightarrow\infty$. That means the first part of the last equality in equation (\ref{mgf convergence1})
$$e^{\frac{\mathbb{E}[\varepsilon]\theta}{\sqrt{n}}\left(\sum\limits_{k=1}^{n}{\overline{G}_1(k)}-
n\mu\right)}\rightarrow 0,  ~~~~~~~~  {\rm as}~n\rightarrow\infty.$$
Associated with the two facts equations (\ref{sum G_1^2}) and (\ref{sum G_2}) we arrive at that 
for any $\theta\in \mathbb{R}$ 
\begin{equation}\label{mgf convergence2}
\begin{aligned}
\mathbb{E}\left[e^{\frac{\theta}{\sqrt{n}}\left(\sum\limits_{k=1}^{n}{X_{k}-n\mu}\right)}\right]
=&e^{\frac{\mathbb{E}[\varepsilon]\theta}{\sqrt{n}}\left(\sum\limits_{k=1}^{n}{\overline{G}_1(k)}-
n\mu\right)+\frac{{\theta}^2}{n}\left(\mathbb{E}[\varepsilon]\sum\limits_{k=1}^{n}{\overline{G}_2(k)}+
\frac{{\rm Var}[\varepsilon]}{2}\sum\limits_{k=1}^{n}\overline{G}_1^2(k)\right)+O(\frac{1}{n})}\\
\rightarrow & e^{\theta^2\left(\mathbb{E}[\varepsilon]\frac{\Vert{\rm Var}[\boldsymbol{\xi}]\Vert_{1}}{2(1-\Vert\mathbb{E}[\boldsymbol{\xi}]\Vert_{1})^3}+\frac{{\rm Var}[\varepsilon]}{2}\frac{1}{(1-\Vert\mathbb{E}[\boldsymbol{\xi}]\Vert_{1})^2}\right)},
\end{aligned}
\end{equation}
as $n\rightarrow\infty$. This provides the desired result for CLT and completes the proof.
\end{proof}

\begin{proof}[Proof of Corollary~\ref{Coll1:CLT}]
When $\varepsilon$ is Poisson with mean $\alpha_{0}$, $\xi_{k}$ are Poisson with mean $\alpha_{k}$, $k=1,2,\ldots$, then we can compute $\mathbb{E}[\varepsilon]={\rm Var}[\varepsilon]=\alpha_{0}, \Vert\mathbb{E}[\boldsymbol{\xi}]\Vert_{1}=\sum_{i=1}^{\infty}\mathbb{E}[{\xi}_{i}]=
    \sum_{i=1}^{\infty}\alpha_{i}=\Vert \alpha\Vert_{1}, \Vert{\rm Var}[\boldsymbol{\xi}]\Vert_{1}=\sum_{i=1}^{\infty}\rm Var[{\xi}_{i}]=\sum_{i=1}^{\infty}\alpha_{i}=\Vert \alpha\Vert_{1}$. By applying Theorem~\ref{Thm:CLT} we have
\begin{equation*}
\frac{\sum_{k=1}^{n}X_{k}-n\mu}{\sqrt{n}}\rightarrow N\left(0, \frac{\alpha_0}{(1-\Vert\alpha\Vert_{1})^3} \right)
\end{equation*}
in distribution as $n\rightarrow \infty$.
This completes the proof.	
\end{proof}

\begin{proof}[Proof of Corollary~\ref{Coll2:CLT}]
Consider INAR(1) model where we assume $\xi_{k}\equiv 0$ for every $k\geq 2$. Then we can compute
$\Vert\mathbb{E}[\boldsymbol{\xi}]\Vert_{1}=\sum_{i=1}^{\infty}\mathbb{E}[{\xi}_{i}]=
    \mathbb{E}[{\xi}_{1}]$ and 
$\Vert\rm Var[\boldsymbol{\xi}]\Vert_{1}=\sum_{i=1}^{\infty}\rm Var[{\xi}_{i}]=\rm Var[{\xi}_{1}]$.

By applying Theorem~\ref{Thm:CLT} we have
\begin{equation*}\frac{\sum_{k=1}^{n}X_{k}-n\mu}{\sqrt{n}}\rightarrow N\left(0, \frac{\mathbb{E}[\varepsilon]{\rm Var}[\xi_1]+{\rm Var}[\varepsilon](1-\mathbb{E}[\xi_1])}
{(1-\mathbb{E}[\xi_1])^3} \right)
\end{equation*}
in distribution as $n\rightarrow \infty$.
This completes the proof.	
\end{proof}

%%%%%%%%%%%%%%%%%%%%%%%%%%%%%%%%%%
\subsection{Technical Lemmas}
\begin{lemma}\label{Lem1:MDP}
For any $n\in\mathbb{N}$, $\overline{G}_1(n)\leqslant\frac{1}{1-\Vert\mathbb{E}[\boldsymbol{\xi}]\Vert_{1}}$,
where $\overline{G}_1(k)=1+\sum\limits_{i=1}^{k-1}{\mathbb{E}[\xi_{i}]\overline{G}_1(k-i)}$ for $k\geqslant2$ and $\overline{G}_1(1)=1$.
\end{lemma}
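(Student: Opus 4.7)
The plan is to prove the bound by strong induction on $n$, exploiting the monotone recursive structure of $\overline{G}_1$ together with the assumption $\Vert\mathbb{E}[\boldsymbol{\xi}]\Vert_{1}<1$. Write $S:=\Vert\mathbb{E}[\boldsymbol{\xi}]\Vert_{1}$ for brevity, so the target bound reads $\overline{G}_1(n)\leq (1-S)^{-1}$.

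For the base case $n=1$, the definition gives $\overline{G}_1(1)=1$. Since $0\le S<1$, we have $(1-S)^{-1}\ge 1$, so the bound holds. For the inductive step, fix $k\geq 2$ and assume $\overline{G}_1(j)\le (1-S)^{-1}$ for all $1\le j\le k-1$. From the defining recursion I would then estimate
\begin{equation*}
\overline{G}_1(k)=1+\sum_{i=1}^{k-1}\mathbb{E}[\xi_{i}]\overline{G}_1(k-i)\le 1+\frac{1}{1-S}\sum_{i=1}^{k-1}\mathbb{E}[\xi_{i}]\le 1+\frac{S}{1-S}=\frac{1}{1-S},
\end{equation*}
where the first inequality uses the inductive hypothesis together with $\mathbb{E}[\xi_i]\ge 0$ (since the $\xi_i$ are nonnegative integer valued), and the second uses $\sum_{i=1}^{k-1}\mathbb{E}[\xi_i]\le S$. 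This closes the induction.

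I do not expect any real obstacle here: the only things being used are nonnegativity of $\mathbb{E}[\xi_i]$, the tail bound $\sum_{i=1}^{k-1}\mathbb{E}[\xi_i]\le\Vert\mathbb{E}[\boldsymbol{\xi}]\Vert_1$, and the standing assumption $\Vert\mathbb{E}[\boldsymbol{\xi}]\Vert_1<1$ (which is part of Assumption~\ref{CLT-Assumption}(a)). The argument is a straightforward discrete-time analogue of the Gronwall/renewal bound used to control the intensity of a subcritical Hawkes kernel, and the bound $(1-S)^{-1}$ is tight, being achieved in the limit $k\to\infty$ in view of the identity $\lim_{n\to\infty}n^{-1}\sum_{k=1}^n\overline{G}_1(k)=(1-S)^{-1}$ established later in the proof of Theorem~\ref{Thm:MDP}.
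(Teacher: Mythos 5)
Your proof is correct and is essentially identical to the paper's: both argue by strong induction, bound each $\overline{G}_1(k-i)$ by $(1-\Vert\mathbb{E}[\boldsymbol{\xi}]\Vert_1)^{-1}$ using nonnegativity of $\mathbb{E}[\xi_i]$, and close the induction via $\sum_{i=1}^{k-1}\mathbb{E}[\xi_i]\le\Vert\mathbb{E}[\boldsymbol{\xi}]\Vert_1<1$. No differences worth noting.
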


\begin{proof}[Proof of Lemma~\ref{Lem1:MDP}]
We prove Lemma~\ref{Lem1:MDP} by induction on $k$. By assumption $\Vert\mathbb{E}[\boldsymbol{\xi}]\Vert_{1}<1$, then $\overline{G}_1(1)\leqslant\frac{1}{1-\Vert\mathbb{E}[\boldsymbol{\xi}]\Vert_{1}}$. Now let's assume that $\overline{G}_1(s)\leqslant\frac{1}{1-\Vert\mathbb{E}[\boldsymbol{\xi}]\Vert_{1}}$ for all $s\leqslant{k}$. Then we can compute,
\begin{align*}
\overline{G}_1(k+1)&=1+\sum\limits_{i=1}^{k}{\mathbb{E}[\xi_{i}]\overline{G}_1(k+1-i)}
\leqslant1+\sum\limits_{i=1}^{k}{\mathbb{E}[\xi_{i}]}\frac{1}{1-\Vert\mathbb{E}[\boldsymbol{\xi}]\Vert_{1}}
\\
&=\frac{1-\Vert\mathbb{E}[\boldsymbol{\xi}]\Vert_{1}+\sum\limits_{i=1}^{k}{\mathbb{E}[\xi_{i}]}}{1-\Vert\mathbb{E}[\boldsymbol{\xi}]\Vert_{1}}
\leqslant\frac{1}{1-\Vert\mathbb{E}[\boldsymbol{\xi}]\Vert_{1}}.
\end{align*}
Hence, we proved that for every $n\in\mathbb{N}$, $\overline{G}_1(n)\leqslant\frac{1}{1-\Vert\mathbb{E}[\boldsymbol{\xi}]\Vert_{1}}$.
\end{proof}

\begin{lemma}\label{Lem2:MDP}
For any $n\in\mathbb{N}$, $\overline{G}_2(n)\leqslant\frac{\Vert{\rm Var}[\boldsymbol{\xi}]\Vert_{1}}{2(1-\Vert\mathbb{E}[\boldsymbol{\xi}]\Vert_{1})^3}$,
where
\begin{align*}\overline{G}_2(n)=\sum\limits_{i=1}^{n-1}{\mathbb{E}[\xi_{i}]\overline{G}_2(n-i)}
+\frac{1}{2}\sum\limits_{i=1}^{n-1}{{\rm Var}[\xi_{i}]\overline{G}_1^2(n-i)}
,\end{align*}
for $n\geqslant2$ and $\overline{G}_2(1)=0$.
\end{lemma}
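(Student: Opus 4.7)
The plan is to prove Lemma \ref{Lem2:MDP} by induction on $n$, mirroring the structure of the proof of Lemma \ref{Lem1:MDP}. Denote the target bound by
\begin{equation*}
M := \frac{\Vert{\rm Var}[\boldsymbol{\xi}]\Vert_{1}}{2(1-\Vert\mathbb{E}[\boldsymbol{\xi}]\Vert_{1})^3}.
\end{equation*}
The base case $n=1$ is immediate since $\overline{G}_2(1)=0\le M$.

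For the inductive step, I would assume $\overline{G}_2(s)\le M$ for every $s\le k$ and estimate $\overline{G}_2(k+1)$ using the defining recursion. On the first sum, the induction hypothesis gives $\sum_{i=1}^{k}\mathbb{E}[\xi_{i}]\overline{G}_2(k+1-i)\le M\sum_{i=1}^{k}\mathbb{E}[\xi_{i}]\le M\Vert\mathbb{E}[\boldsymbol{\xi}]\Vert_{1}$. On the second sum, I would invoke Lemma \ref{Lem1:MDP} to bound $\overline{G}_1^2(k+1-i)\le (1-\Vert\mathbb{E}[\boldsymbol{\xi}]\Vert_{1})^{-2}$, obtaining
\begin{equation*}
\tfrac{1}{2}\sum_{i=1}^{k}{\rm Var}[\xi_{i}]\overline{G}_1^2(k+1-i)
\le \frac{\Vert{\rm Var}[\boldsymbol{\xi}]\Vert_{1}}{2(1-\Vert\mathbb{E}[\boldsymbol{\xi}]\Vert_{1})^2}
= M\bigl(1-\Vert\mathbb{E}[\boldsymbol{\xi}]\Vert_{1}\bigr).
\end{equation*}
Adding the two estimates yields $\overline{G}_2(k+1)\le M\Vert\mathbb{E}[\boldsymbol{\xi}]\Vert_{1}+M(1-\Vert\mathbb{E}[\boldsymbol{\xi}]\Vert_{1})=M$, completing the induction.

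There is no real obstacle here: the bound $M$ has been engineered precisely so that the two contributions close up exactly, and the only ingredients used are Assumption \ref{CLT-Assumption}(a) (which guarantees $\Vert\mathbb{E}[\boldsymbol{\xi}]\Vert_{1}<1$ and $\Vert{\rm Var}[\boldsymbol{\xi}]\Vert_{1}<\infty$) and the previously established bound on $\overline{G}_1$. The mild subtlety is just to notice that the natural candidate bound is exactly the fixed point of the affine recursion $y\mapsto \Vert\mathbb{E}[\boldsymbol{\xi}]\Vert_{1}\,y+\tfrac{1}{2}\Vert{\rm Var}[\boldsymbol{\xi}]\Vert_{1}(1-\Vert\mathbb{E}[\boldsymbol{\xi}]\Vert_{1})^{-2}$, which forces the choice of $M$ above and makes the induction tight.
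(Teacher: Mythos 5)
Your induction is correct and is essentially identical to the paper's own proof: both argue by induction on $n$, bound the first sum by the induction hypothesis, bound the second via Lemma~\ref{Lem1:MDP}, and observe that the two contributions sum exactly to the claimed bound. No issues.
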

\begin{proof}[Proof of Lemma~\ref{Lem2:MDP}]
We prove Lemma~\ref{Lem2:MDP} by induction on $k$. By assumption $\Vert\mathbb{E}[\boldsymbol{\xi}]\Vert_{1}<1$, it is not hard to see $\overline{G}_2(1)\leqslant\frac{\Vert{\rm Var}[\boldsymbol{\xi}]\Vert_1}{2(1-\Vert\mathbb{E}[\boldsymbol{\xi}]\Vert_{1})^3}$. Now let's assume that $\overline{G}_2(k)\leqslant\frac{\Vert{\rm Var}[\boldsymbol{\xi}]\Vert_1}{2(1-\Vert\mathbb{E}[\boldsymbol{\xi}]\Vert_{1})^3}$. for all $k\leqslant{n}$, then we can compute,
\begin{align*}
\overline{G}_2(k+1)&=\sum\limits_{i=1}^{k}{\mathbb{E}[\xi_{i}]\overline{G}_2(k+1-i)}+\frac{1}{2}
\sum\limits_{i=1}^{k}{{\rm Var}[\xi_{i}]\overline{G}_1^2(k+1-i)}
\\
&\leqslant\sum\limits_{i=1}^{k}{\mathbb{E}[\xi_{i}]}\frac{\Vert{\rm Var}[\boldsymbol{\xi}]\Vert_{1}}{2(1-\Vert\mathbb{E}
[\boldsymbol{\xi}]\Vert_{1})^3}+\frac{1}{2}\sum\limits_{i=1}^{k}{{\rm Var}[\xi_{i}]}\frac{1}{(1-\Vert\mathbb{E}[\boldsymbol{\xi}]\Vert_{1})^2}
\\
&\leqslant\frac{\Vert\mathbb{E}[\boldsymbol{\xi}]\Vert_{1}\Vert{\rm Var}[\boldsymbol{\xi}]\Vert_{1}}{2(1-\Vert\mathbb{E}[\boldsymbol{\xi}]\Vert_{1})^3}+\frac
{\Vert{\rm Var}[\boldsymbol{\xi}]\Vert_{1}}{2(1-\Vert\mathbb{E}[\boldsymbol{\xi}]\Vert_{1})^2}
=\frac{\Vert{\rm Var}[\boldsymbol{\xi}]\Vert_{1}}{2(1-\Vert\mathbb{E}[\boldsymbol{\xi}]\Vert_{1})^3}.
\end{align*}
Hence, we proved that for every $n\in\mathbb{N}$, $\overline{G}_2(n)\leqslant\frac{\Vert{\rm Var}[\boldsymbol{\xi}]\Vert_{1}}{2(1-\Vert\mathbb{E}[\boldsymbol{\xi}]\Vert_{1})^3}$.
\end{proof}

%%%%%%%%%%%%%%%%%%%%%%%%%%%%%%%%%%%%
\section*{Disclosure Statement}
The authors declare that there are no financial or non-financial competing interests related to this work.
%%%%%%%%%%%%%%%%%%%%%%%%%%%%%%%%%%%%
\section*{Acknowledgments}
	Nian Yao was supported in part by the Natural Science Foundation of China under Grant 12071361,
the Natural Science Foundation of Guangdong Province under Grant 2020A1515010822
and Shenzhen Natural Science Fund (the Stable Support Plan Program 20220810152104001). 
We wish to thank Prof Lingjiong Zhu for his helpful discussions and guidance.

%%%%%%%%%%%%%%%%%%%%%%%%%%%%%%%%%%
%\begin{thebibliography}{13}

%\bibitem{Kirchner}
%Kirchner, M. (2016). Hawkes and INAR$(\infty)$ processes. \textit{Stochastic Processes and their Applications}. \textbf{126}, 2494-2525.

%\bibitem{Wang} Wang, H. (2020). Large and moderate deviations for a discrete-time marked Hawkes process. \textit{arXiv:2003.05772}.

%\bibitem{Yu} Yu, S., Wang, D. and Chen, X. (2018). Large and moderate deviations for the total population arising from a sub-critical Galton-Watson process
%with immigration. \textit{Journal of Theoretical Probability}. \textbf{31}, 41-67.

%\end{thebibliography}

\bibliographystyle{alpha}
\bibliography{mybibfile}
\end{document}